\newtheorem{theorem}{Theorem}
\newtheorem{corollary}[theorem]{Corollary}
\newtheorem{definition}{Definition}
\newtheorem{example}{Example}
\newtheorem{lemma}[theorem]{Lemma}
\newtheorem{proposition}[theorem]{Proposition}
\newtheorem{remark}{Remark}
\begin{document}
\title[Characterizations of algebras]{Characterizations of algebras of
rapidly decreasing generalized functions }
\author{C. Bouzar}
\address{Department of Mathematics, Oran-Essenia University, Oran, Algeria}
\email{bouzar@univ-oran.dz; bouzarchikh@hotmail.com}
\author{T. Saidi}
\address{Centre Universitaire de B\'{e}char, Algeria }
\email{saidi\_tb@yahoo.fr }
\subjclass{46F30; 46F05; 42B10}
\keywords{Schwartz space $S$, Rapidly decreasing generalized functions,
Colombeau generalized functions, Fourier transform}

\begin{abstract}
The well-known characterizations of Schwartz space $\mathcal{S}$ of rapidly
decreasing functions is extended to the algebra $\mathcal{G}_{\mathcal{S}}$
of rapidly decreasing generalized functions and to the algebra $\mathcal{G}_{%
\mathcal{S}}^{\infty }$ of regular rapidly decreasing generalized functions.
\end{abstract}

\maketitle

\section{ Introduction}

The Schwartz space $\mathcal{S}$\ of rapidly decreasing functions on $%
\mathbb{R}^{n}$ and its generalizations$,$ in view of their importance in
many domains of analysis, have been characterized differently by many
authors, e.g. see \cite{GV}, \cite{Kas}, \cite{Ortner}, \cite{SY}, \cite%
{Alvar} and \cite{Groch}. To built a Fourier analysis within the generalized
functions of \cite{Colombeau}, the algebra of rapidly decreasing generalized
functions on $\mathbb{R}^{n},$ denoted $\mathcal{G}_{\mathcal{S}}$, was
first constructed in \cite{Radyno} and recently studied in \cite{Garetto}
and \cite{Del}. The algebra\ of regular rapidly decreasing generalized
functions on $\mathbb{R}^{n},$ denoted $\mathcal{G}_{\mathcal{S}}^{\infty }$%
, is fundamental in the characterization of the local regularity of a
Colombeau generalized function by its Fourier transform and also for
developing a generalized microlocal analysis.

Let

\begin{equation*}
\mathcal{S}^{\ast }=\left\{ f\in \mathcal{C}^{\infty }:\forall \alpha \in
\mathbb{Z}_{+}^{n},\underset{x\in \mathbb{R}^{n}}{\sup }\left\vert \partial
^{\alpha }f\left( x\right) \right\vert <\infty \right\} ,
\end{equation*}%
\begin{equation*}
\mathcal{S}_{\ast }=\left\{ f\in \mathcal{C}^{\infty }:\forall \beta \in
\mathbb{Z}_{+}^{n},\underset{x\in \mathbb{R}^{n}}{\sup }\left\vert x^{\beta
}f\left( x\right) \right\vert <\infty \right\} ,
\end{equation*}%
then, inspired by the work of \cite{Kas}, the authors of \cite{SY}\ proved
the following result :
\begin{equation*}
\mathcal{S=S}^{\ast }\cap \mathcal{S}_{\ast }
\end{equation*}

The aim of this work is to characterize the algebras $\mathcal{G}_{\mathcal{S%
}}$\ and $\mathcal{G}_{\mathcal{S}}^{\infty }$\ in the spirit of the
characterization of the Schwartz space $\mathcal{S}$\ done in \cite{SY}. In
fact we do more, this characterization is given in the general context of
the algebra $\mathcal{G}_{\mathcal{S}}^{\mathcal{R}}\left( \Omega \right) $
of $\mathcal{R}-$regular rapidly decreasing generalized functions on an open
set $\Omega $ of $\mathbb{R}^{n},$\ see \cite{Del} and \cite{Bouzben}.\ The
sixth section of this paper gives such an extension, i.e. the
characterization of the algebra $\mathcal{G}_{\mathcal{S}}^{\mathcal{R}%
}\left( \Omega \right) $ provided $\Omega $ is a box of $\mathbb{R}^{n}.$
The seventh section gives a characterization of $\mathcal{G}_{\mathcal{S}}^{%
\mathcal{R}}\left( \mathbb{R}^{n}\right) $ of $\mathcal{R}-$regular rapidly
decreasing generalized functions on the whole space $\mathbb{R}^{n}$\ using
the Fourier transform. The last section, as corollaries of the results of
the paper, gives the characterizations of the classical algebras $\mathcal{G}%
_{\mathcal{S}}$\ and $\mathcal{G}_{\mathcal{S}}^{\mathcal{\infty }}$.

\section{Regular sets of sequences}

We will adopt the notations and definitions of distributions and Colombeau
generalized functions, see \cite{Hormander} and \cite{Grosser} .

\begin{definition}
\label{Def1}A non void subset $\mathcal{R}$ of $\mathbb{R}_{+}^{\mathbb{Z}%
_{+}}$ is said to be regular, if

For all $\left( N_{m}\right) _{m\in \mathbb{Z}_{+}}\in \mathcal{R}$ and $%
\left( k,k^{\prime }\right) \in \mathbb{Z}_{+}^{2},$ there exists $\left(
N_{m}^{\prime }\right) _{m\in \mathbb{Z}_{+}}\in \mathcal{R}$ such that
\begin{equation}
N_{m+k}+k^{\prime }\leq N_{m}^{\prime }\text{ , }\forall m\in \mathbb{Z}_{+}
\tag{R1}  \label{c1}
\end{equation}

For all $\left( N_{m}\right) _{m\in \mathbb{Z}_{+}}$ and $\left(
N_{m}^{\prime }\right) _{m\in \mathbb{Z}_{+}}$ in $\mathcal{R},$ there
exists $\left( N"_{m}\right) _{m\in \mathbb{Z}_{+}}\in \mathcal{R}$ such
that
\begin{equation}
\max \left( N_{m},N_{m}^{\prime }\right) \leq N"_{m}\mathcal{\ }\text{, }%
\forall m\in \mathbb{Z}_{+}  \tag{R2}  \label{c2}
\end{equation}

For all $\left( N_{m}\right) _{m\in \mathbb{Z}_{+}}$ and $\left(
N_{m}^{\prime }\right) _{m\in \mathbb{Z}_{+}}$ in $\mathcal{R},$ there
exists $\left( N"_{m}\right) _{m\in \mathbb{Z}_{+}}\in \mathcal{R}$ such
that
\begin{equation}
N_{l_{1}}+N_{l_{2}}^{\prime }\leq N"_{l_{1}+l_{2}}\text{ , }\forall \left(
l_{1},l_{2}\right) \in \mathbb{Z}_{+}^{2}  \tag{R3}  \label{c3}
\end{equation}
\end{definition}

\begin{example}
The set $\mathbb{R}_{+}^{\mathbb{Z}_{+}}$ of all positive sequences is
regular.
\end{example}

\begin{example}
The set $\mathcal{A}$ of affine sequences defined by
\begin{equation*}
\mathcal{A=}\left\{ \left( N_{m}\right) _{m\in \mathbb{N}}\in \mathbb{R}%
_{+}^{\mathbb{Z}_{+}}:\exists a\geq 0,\exists b>0,\forall l\in \mathbb{Z}%
_{+},N_{l}\leq al+b\right\}
\end{equation*}%
is regular.
\end{example}

\begin{example}
The set $\mathcal{B}$ of all bounded sequences of $\mathbb{R}_{+}^{\mathbb{Z}%
_{+}}$ is regular.
\end{example}

The notion of regular set is extended to the sets of double sequences.

\begin{definition}
\label{Def2}A non void subset $\widetilde{\mathcal{R}}$ of $\mathbb{R}_{+}^{%
\mathbb{Z}_{+}^{2}}$ is \ said to be regular if

For all $\left( N_{q,l}\right) _{\left( q,l\right) \in \mathbb{Z}%
_{+}^{2}}\in \widetilde{\mathcal{R}}$ and $\left( k,k^{\prime },k^{\prime
\prime }\right) \in \mathbb{Z}_{+}^{3},$ there exists $\left(
N_{q,l}^{\prime }\right) _{\left( q,l\right) \in \mathbb{Z}_{+}^{2}}\in
\widetilde{\mathcal{R}}$ such that
\begin{equation}
N_{q+k,l+k^{\prime }}+k^{\prime \prime }\leq N_{q,l}^{\prime }\text{ , }%
\forall \left( q,l\right) \in \mathbb{Z}_{+}^{2}  \tag{$\widetilde{R}1$}
\label{d1}
\end{equation}

For all $\left( N_{q,l}\right) _{\left( q,l\right) \in \mathbb{Z}_{+}^{2}}$
and $\left( N_{q,l}^{\prime }\right) _{\left( q,l\right) \in \mathbb{Z}%
_{+}^{2}}$ in $\widetilde{\mathcal{R}}$ $,$ there exists $\left(
N"_{q,l}\right) _{\left( q,l\right) \in \mathbb{Z}_{+}^{2}}\in \widetilde{%
\mathcal{R}}$ such that
\begin{equation}
\max \left( N_{q,l},N_{q,l}^{\prime }\right) \leq N"_{q,l}\mathcal{\ }\text{%
, }\forall \left( q,l\right) \in \mathbb{Z}_{+}^{2}  \tag{$\widetilde{R}2$}
\label{d2}
\end{equation}

For all $\left( N_{q,l}\right) _{\left( q,l\right) \in \mathbb{Z}_{+}^{2}}$
and $\left( N_{q,l}^{\prime }\right) _{\left( q,l\right) \in \mathbb{Z}%
_{+}^{2}}$ in $\widetilde{\mathcal{R}}$ $,$ there exists $\left(
N"_{q,l}\right) _{\left( q,l\right) \in \mathbb{Z}_{+}^{2}}\in \widetilde{%
\mathcal{R}}$ such that
\begin{equation}
N_{q_{1},l_{1}}+N_{q_{2},l_{2}}^{\prime }\leq N"_{q_{1}+q_{2},l_{1}+l_{2}}%
\text{ , }\forall \left( q_{1},q_{2},l_{1},l_{2}\right) \in \mathbb{Z}%
_{+}^{4}  \tag{$\widetilde{R}3$}  \label{d3}
\end{equation}
\end{definition}

\begin{example}
The set $\mathbb{R}_{+}^{\mathbb{Z}_{+}^{2}}$ of all positive double
sequences is regular.
\end{example}

\begin{example}
The set $\widetilde{\mathcal{B}}$ of all bounded sequences of $\mathbb{R}%
_{+}^{\mathbb{Z}_{+}^{2}}$ is regular.
\end{example}

The following lemma, not difficult to prove, is needed in the formulation of
the principal theorem of this paper.

\begin{lemma}
Let $\widetilde{\mathcal{R}}$ be a regular subset of $\mathbb{R}_{+}^{%
\mathbb{Z}_{+}^{2}}$, then

$\left( i\right) $ The subset $\mathcal{R}^{0}:=\left\{ N_{.,0}:N\in
\widetilde{\mathcal{R}}\right\} $ is regular in $\mathbb{R}_{+}^{\mathbb{Z}%
_{+}}$.

$\left( ii\right) $ The subset $\mathcal{R}_{0}:=\left\{ N_{0,.}:N\in
\widetilde{\mathcal{R}}\right\} $ is regular in $\mathbb{R}_{+}^{\mathbb{Z}%
_{+}}$.
\end{lemma}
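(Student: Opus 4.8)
The plan is to verify the three defining conditions \eqref{c1}, \eqref{c2} and \eqref{c3} for $\mathcal{R}^{0}$ one at a time, in each case by restricting to the slice $l=0$ the corresponding condition \eqref{d1}, \eqref{d2}, \eqref{d3} enjoyed by $\widetilde{\mathcal{R}}$. Since $\widetilde{\mathcal{R}}$ is non void, so is $\mathcal{R}^{0}$, and likewise for $\mathcal{R}_{0}$ in part $(ii)$; thus only the three inequalities require checking. Throughout I write an element of $\mathcal{R}^{0}$ as $\left( N_{m,0}\right) _{m}$ for a suitable $N\in \widetilde{\mathcal{R}}$.

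For \eqref{c1}, starting from $\left( N_{m,0}\right) _{m}\in \mathcal{R}^{0}$ and a pair $\left( k,k^{\prime }\right) \in \mathbb{Z}_{+}^{2}$, the idea is to apply \eqref{d1} to $N$ with the triple $\left( k,0,k^{\prime }\right) $, obtaining $N^{\prime }\in \widetilde{\mathcal{R}}$ with $N_{q+k,l}+k^{\prime }\leq N_{q,l}^{\prime }$ for all $\left( q,l\right) $. Specializing to $l=0$ gives $N_{m+k,0}+k^{\prime }\leq N_{m,0}^{\prime }$, so that $\left( N_{m,0}^{\prime }\right) _{m}\in \mathcal{R}^{0}$ is the required witness. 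The only delicate point of the whole proof is the parameter bookkeeping here: the additive constant $k^{\prime }$ of \eqref{c1} must be routed into the third slot of \eqref{d1}, while the second shift is set to zero.

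Conditions \eqref{c2} and \eqref{c3} are more direct. Given $\left( N_{m,0}\right) _{m}$ and $\left( N_{m,0}^{\prime }\right) _{m}$ in $\mathcal{R}^{0}$, for \eqref{c2} I apply \eqref{d2} to $N,N^{\prime }$ to get $N^{\prime \prime }$ with $\max \left( N_{q,l},N_{q,l}^{\prime }\right) \leq N_{q,l}^{\prime \prime }$, and set $l=0$. For \eqref{c3} I apply \eqref{d3} to the same pair and set the second-coordinate indices to zero, leaving $q_{1},q_{2}$ free; the resulting inequality $N_{q_{1},0}+N_{q_{2},0}^{\prime }\leq N_{q_{1}+q_{2},0}^{\prime \prime }$ is exactly \eqref{c3} for $\mathcal{R}^{0}$ once $q_{1},q_{2}$ are read as the running indices.

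Part $(ii)$ follows by the symmetric argument, slicing at $q=0$ instead of $l=0$: one applies \eqref{d1} with the triple $\left( 0,k,k^{\prime }\right) $ for \eqref{c1}, and sets $q=0$ (respectively $q_{1}=q_{2}=0$) in \eqref{d2} and \eqref{d3}. Since no genuine obstacle arises beyond matching the shift and constant parameters correctly in the \eqref{c1}/\eqref{d1} step, this confirms the lemma's description as ``not difficult to prove''.
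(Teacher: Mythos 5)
Your proof is correct. The paper itself omits the argument entirely (the lemma is merely declared ``not difficult to prove''), and your slicing argument --- verifying (R1)--(R3) for $\mathcal{R}^{0}$ and $\mathcal{R}_{0}$ by specializing $(\widetilde{R}1)$--$(\widetilde{R}3)$ at $l=0$ (resp. $q=0$), with the constant $k^{\prime}$ routed into the third slot of $(\widetilde{R}1)$ --- is exactly the intended one, with the parameter bookkeeping handled correctly.
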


\section{The algebra of\ $\mathcal{R-}$regular bounded generalized functions}

Let
\begin{equation*}
\mathcal{S}^{\ast }\left( \Omega \right) =\left\{ f\in \mathcal{C}^{\infty
}\left( \Omega \right) :\forall \alpha \in \mathbb{Z}_{+}^{n},\underset{x\in
\Omega }{\sup }\left\vert \partial ^{\alpha }f\left( x\right) \right\vert
<\infty \right\} ,
\end{equation*}%
and $\mathcal{R}$ be a regular subset of $\mathbb{R}_{+}^{\mathbb{Z}_{+}},$
if we define

\begin{equation*}
\mathcal{E}_{\mathcal{S}^{\ast }}^{\mathcal{R}}\left( \Omega \right)
=\left\{ \left( u_{\epsilon }\right) _{\epsilon }\in \mathcal{S}^{\ast
}\left( \Omega \right) ^{I}:\exists N\in \mathcal{R},\forall \alpha \in
\mathbb{Z}_{+}^{n},\underset{x\in \Omega }{\sup }\left\vert \partial
^{\alpha }u_{\epsilon }\left( x\right) \right\vert =O\left( \epsilon
^{-N_{\left\vert \alpha \right\vert }}\right) ,\epsilon \longrightarrow
0\right\} ,
\end{equation*}
\begin{equation*}
\mathcal{N}_{\mathcal{S}^{\ast }}^{\mathcal{R}}\left( \Omega \right)
=\left\{ \left( u_{\epsilon }\right) _{\epsilon }\in \mathcal{S}^{\ast
}\left( \Omega \right) ^{I}:\forall N\in \mathcal{R},\forall \alpha \in
\mathbb{Z}_{+}^{n},\underset{x\in \Omega }{\sup }\left\vert \partial
^{\alpha }u_{\epsilon }\left( x\right) \right\vert =O\left( \epsilon
^{N_{\left\vert \alpha \right\vert }}\right) ,\epsilon \longrightarrow
0\right\} ,
\end{equation*}
where $I=\left] 0,1\right] ,$ then the properties of $\mathcal{E}_{\mathcal{S%
}^{\ast }}^{\mathcal{R}}\left( \Omega \right) $ and $\mathcal{N}_{\mathcal{S}%
^{\ast }}^{\mathcal{R}}\left( \Omega \right) $, easy to verify, are given by
the following results.

\begin{proposition}
$\left( i\right) $ The space $\mathcal{E}_{\mathcal{S}^{\ast }}^{\mathcal{R}%
}\left( \Omega \right) $ is a subalgebra of $\mathcal{S}^{\ast }\left(
\Omega \right) ^{I}.$

$\left( ii\right) $ The space $\mathcal{N}_{\mathcal{S}^{\ast }}^{\mathcal{R}%
}\left( \Omega \right) $ is an ideal of $\mathcal{E}_{\mathcal{S}^{\ast }}^{%
\mathcal{R}}\left( \Omega \right) .$

$\left( iii\right) $ We have $\mathcal{N}_{\mathcal{S}^{\ast }}^{\mathcal{R}%
}\left( \Omega \right) =\mathcal{N}_{\mathcal{S}^{\ast }}\left( \Omega
\right) ,$ where
\begin{equation*}
\mathcal{N}_{\mathcal{S}^{\ast }}\left( \Omega \right) =\left\{ \left(
u_{\epsilon }\right) _{\epsilon }\in \mathcal{S}^{\ast }\left( \Omega
\right) ^{I}:\forall m\in \mathbb{Z}_{+},\forall \alpha \in \mathbb{Z}%
_{+}^{n},\underset{x\in \Omega }{\sup }\left\vert \partial ^{\alpha
}u_{\epsilon }\left( x\right) \right\vert =O\left( \epsilon ^{m}\right)
,\epsilon \longrightarrow 0\right\}
\end{equation*}
\end{proposition}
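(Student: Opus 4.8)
The plan is to establish the three assertions in the order $(iii)$, $(i)$, $(ii)$: the identity in $(iii)$ replaces the quantifier ``$\forall N\in\mathcal{R}$'' occurring in the definition of $\mathcal{N}_{\mathcal{S}^{\ast}}^{\mathcal{R}}\left(\Omega\right)$ by the simpler ``$\forall m\in\mathbb{Z}_{+}$'', and this reduction makes the ideal computation in $(ii)$ almost mechanical.

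First I would prove $(iii)$ by two inclusions, the only tool being \eqref{c1}. For $\mathcal{N}_{\mathcal{S}^{\ast}}^{\mathcal{R}}\left(\Omega\right)\subseteq\mathcal{N}_{\mathcal{S}^{\ast}}\left(\Omega\right)$, fix $m\in\mathbb{Z}_{+}$ and $\alpha$; applying \eqref{c1} with $k=0$, $k^{\prime}=m$ to any fixed element of $\mathcal{R}$ yields $N\in\mathcal{R}$ with $N_{\left\vert\alpha\right\vert}\geq m$, and since $\epsilon\in I$ the estimate $O\left(\epsilon^{N_{\left\vert\alpha\right\vert}}\right)$ entails $O\left(\epsilon^{m}\right)$. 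Conversely, given $N\in\mathcal{R}$ and $\alpha$, choose an integer $m\geq N_{\left\vert\alpha\right\vert}$; then $O\left(\epsilon^{m}\right)$ entails $O\left(\epsilon^{N_{\left\vert\alpha\right\vert}}\right)$, again because $\epsilon\leq1$. Both arguments are pointwise in $\alpha$, so no uniform choice is required.

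For $(i)$, closure under sums rests on \eqref{c2}: if $\left(u_{\epsilon}\right)_{\epsilon}$ and $\left(v_{\epsilon}\right)_{\epsilon}$ are controlled by $N,N^{\prime}\in\mathcal{R}$, then \eqref{c2} furnishes $N^{\prime\prime}\in\mathcal{R}$ dominating both, whence $\sup_{x\in\Omega}\left\vert\partial^{\alpha}\left(u_{\epsilon}+v_{\epsilon}\right)\right\vert=O\left(\epsilon^{-N_{\left\vert\alpha\right\vert}^{\prime\prime}}\right)$; scalar multiples are trivial. The essential point is the product. By the Leibniz formula $\partial^{\alpha}\left(u_{\epsilon}v_{\epsilon}\right)=\sum_{\beta\leq\alpha}\binom{\alpha}{\beta}\partial^{\beta}u_{\epsilon}\,\partial^{\alpha-\beta}v_{\epsilon}$, each summand is $O\left(\epsilon^{-\left(N_{\left\vert\beta\right\vert}+N_{\left\vert\alpha-\beta\right\vert}^{\prime}\right)}\right)$, and \eqref{c3} provides a single $N^{\prime\prime}\in\mathcal{R}$ with $N_{l_{1}}+N_{l_{2}}^{\prime}\leq N_{l_{1}+l_{2}}^{\prime\prime}$; taking $l_{1}=\left\vert\beta\right\vert$ and $l_{2}=\left\vert\alpha-\beta\right\vert$, so that $l_{1}+l_{2}=\left\vert\alpha\right\vert$, bounds every summand by $O\left(\epsilon^{-N_{\left\vert\alpha\right\vert}^{\prime\prime}}\right)$ uniformly in $\alpha$, and the finite sum inherits this bound. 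This matching of the Leibniz combinatorics against \eqref{c3} is the one step that requires care, and it is the main thing to get right.

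Finally, for $(ii)$ I would first record that $\mathcal{N}_{\mathcal{S}^{\ast}}^{\mathcal{R}}\left(\Omega\right)\subseteq\mathcal{E}_{\mathcal{S}^{\ast}}^{\mathcal{R}}\left(\Omega\right)$: picking a fixed $P\in\mathcal{R}$ and using $P\geq0$ together with $\epsilon\leq1$, an $O\left(\epsilon^{P_{\left\vert\alpha\right\vert}}\right)$ bound is a fortiori an $O\left(\epsilon^{-P_{\left\vert\alpha\right\vert}}\right)$ bound, while closure of $\mathcal{N}_{\mathcal{S}^{\ast}}^{\mathcal{R}}\left(\Omega\right)$ under addition is immediate from its ``$\forall N$'' form. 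For absorption, let $\left(u_{\epsilon}\right)_{\epsilon}\in\mathcal{N}_{\mathcal{S}^{\ast}}^{\mathcal{R}}\left(\Omega\right)=\mathcal{N}_{\mathcal{S}^{\ast}}\left(\Omega\right)$ and $\left(v_{\epsilon}\right)_{\epsilon}\in\mathcal{E}_{\mathcal{S}^{\ast}}^{\mathcal{R}}\left(\Omega\right)$ with governing sequence $M\in\mathcal{R}$. Fixing $\alpha$, $\beta\leq\alpha$ and an arbitrary $m\in\mathbb{Z}_{+}$, the factor $\partial^{\alpha-\beta}v_{\epsilon}$ contributes $O\left(\epsilon^{-M_{\left\vert\alpha-\beta\right\vert}}\right)$ with $M_{\left\vert\alpha-\beta\right\vert}$ a fixed number, so invoking the $\mathcal{N}_{\mathcal{S}^{\ast}}\left(\Omega\right)$ estimate on $u_{\epsilon}$ at integer order $m+\left\lceil M_{\left\vert\alpha-\beta\right\vert}\right\rceil$ renders the product $O\left(\epsilon^{m}\right)$. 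Since $m$ is arbitrary and the Leibniz sum is finite, $\left(u_{\epsilon}v_{\epsilon}\right)_{\epsilon}\in\mathcal{N}_{\mathcal{S}^{\ast}}\left(\Omega\right)$, which together with the inclusion above is exactly the ideal property.
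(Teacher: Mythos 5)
Your proposal is correct: the paper itself offers no proof of this proposition (it is declared ``easy to verify''), and your argument is exactly the routine verification the authors intend, matching the quantifier structure of the definitions against the three regularity axioms — \eqref{c1} for the identification $\mathcal{N}_{\mathcal{S}^{\ast }}^{\mathcal{R}}\left( \Omega \right) =\mathcal{N}_{\mathcal{S}^{\ast }}\left( \Omega \right)$, \eqref{c2} for sums, and \eqref{c3} combined with the Leibniz formula (using $\left\vert \beta \right\vert +\left\vert \alpha -\beta \right\vert =\left\vert \alpha \right\vert$) for products and absorption. Proving $\left( iii\right)$ first and then exploiting it in the ideal property is a clean ordering, and all steps, including the pointwise-in-$\alpha$ choices and the use of $\epsilon \leq 1$, are sound.
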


We have also the null characterization of the ideal $\mathcal{N}_{\mathcal{S}%
^{\ast }}\left( \Omega \right) $ provided $\Omega $\ is a box.

\begin{definition}
An open subset $\Omega $ of $\mathbb{R}^{n}$ is said to be a box if
\begin{equation*}
\Omega =\mathbf{I}_{1}\times \mathbf{I}_{2}\times ...\times \mathbf{I}_{n},
\end{equation*}
where each $\mathbf{I}_{i}$ is a finite or infinite open interval in $%
\mathbb{R}$.
\end{definition}

\begin{proposition}
\label{propnull1}Let $\Omega $ be a box, then an element $\left( u_{\epsilon
}\right) _{\epsilon }$ $\in \mathcal{E}_{\mathcal{S}^{\ast }}^{\mathcal{R}%
}\left( \Omega \right) $ belongs to $\mathcal{N}_{\mathcal{S}^{\ast }}\left(
\Omega \right) $ if and only if the following condition is satisfied
\begin{equation}
\forall m\in \mathbb{Z}_{+},\underset{x\in \Omega }{\sup }\left\vert
u_{\epsilon }\left( x\right) \right\vert =O\left( \epsilon ^{m}\right) \text{
},\text{ }\epsilon \longrightarrow 0  \label{5-1}
\end{equation}
\end{proposition}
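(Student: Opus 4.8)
The plan is to verify the two implications separately. The forward implication is immediate: if $\left(u_{\epsilon}\right)_{\epsilon}\in\mathcal{N}_{\mathcal{S}^{\ast}}\left(\Omega\right)$, then condition $(\ref{5-1})$ is just the defining estimate of $\mathcal{N}_{\mathcal{S}^{\ast}}\left(\Omega\right)$ read off at the multi-index $\alpha=0$. All the content lies in the converse, for which I would combine a one-directional Taylor (finite-difference) interpolation inequality with an induction on the order of differentiation.

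First I would establish the following estimate, valid for any $g\in\mathcal{S}^{\ast}\left(\Omega\right)$ and any index $i\in\left\{1,\dots,n\right\}$. Writing $L_{i}\in\left(0,+\infty\right]$ for the length of the interval $\mathbf{I}_{i}$, I claim that for $x\in\Omega$ and $0<h<L_{i}/2$ at least one of the points $x\pm he_{i}$ again lies in $\Omega$: this is exactly where the box hypothesis enters, since a displacement along a single coordinate axis keeps the remaining coordinates inside their respective intervals, so it never leaves $\Omega=\mathbf{I}_{1}\times\cdots\times\mathbf{I}_{n}$ provided the moved coordinate stays in $\mathbf{I}_{i}$. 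Assuming $x+he_{i}\in\Omega$ (the backward case being symmetric), a first-order Taylor expansion of $t\mapsto g\left(x+te_{i}\right)$ gives $g\left(x+he_{i}\right)=g\left(x\right)+h\,\partial_{i}g\left(x\right)+\tfrac{h^{2}}{2}\partial_{i}^{2}g\left(x+\theta he_{i}\right)$ for some $\theta\in\left(0,1\right)$; solving for $\partial_{i}g\left(x\right)$ and taking suprema yields
\[
\sup_{x\in\Omega}\left\vert\partial_{i}g\left(x\right)\right\vert\leq\frac{2}{h}\sup_{x\in\Omega}\left\vert g\left(x\right)\right\vert+\frac{h}{2}\sup_{x\in\Omega}\left\vert\partial_{i}^{2}g\left(x\right)\right\vert,\qquad 0<h<L_{i}/2.
\]

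Then I would prove $(\ref{5-1})\Rightarrow\left(u_{\epsilon}\right)_{\epsilon}\in\mathcal{N}_{\mathcal{S}^{\ast}}\left(\Omega\right)$ by induction on $\left\vert\alpha\right\vert$. The base case $\left\vert\alpha\right\vert=0$ is precisely $(\ref{5-1})$. For the inductive step, assume $\sup_{\Omega}\left\vert\partial^{\gamma}u_{\epsilon}\right\vert=O\left(\epsilon^{p}\right)$ for every $p$ whenever $\left\vert\gamma\right\vert\leq k$, and take $\beta$ with $\left\vert\beta\right\vert=k+1$, written as $\beta=\gamma+e_{i}$ with $\left\vert\gamma\right\vert=k$. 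Applying the displayed inequality to $g_{\epsilon}:=\partial^{\gamma}u_{\epsilon}$ and using $\partial_{i}g_{\epsilon}=\partial^{\beta}u_{\epsilon}$, I have $\sup_{\Omega}\left\vert g_{\epsilon}\right\vert=O\left(\epsilon^{p}\right)$ for every $p$ by the inductive hypothesis, while moderateness provides $N\in\mathcal{R}$ with $\sup_{\Omega}\left\vert\partial_{i}^{2}g_{\epsilon}\right\vert\leq\sup_{\left\vert\delta\right\vert=k+2}\sup_{\Omega}\left\vert\partial^{\delta}u_{\epsilon}\right\vert=O\left(\epsilon^{-N_{k+2}}\right)$. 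Fixing a target exponent $m$ and choosing $h=\epsilon^{m+N_{k+2}}$ (which is eventually $<L_{i}/2$ for every finite side, so the estimate applies for small $\epsilon$) together with $p=2m+N_{k+2}$ makes both terms on the right-hand side $O\left(\epsilon^{m}\right)$; hence $\sup_{\Omega}\left\vert\partial^{\beta}u_{\epsilon}\right\vert=O\left(\epsilon^{m}\right)$. Since $m$ and $\beta$ are arbitrary, the induction closes.

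The only delicate point, and the reason the box hypothesis is needed, is the boundary behaviour flagged above: near $\partial\Omega$ a centered difference need not remain in $\Omega$, and the product structure is what guarantees that a forward or backward step of size $h$ along a coordinate axis is always available once $h$ is below half the (positive) length of each finite factor. Everything else — the explicit Taylor remainder bound and the bookkeeping of the exponents — is routine.
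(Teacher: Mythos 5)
Your proof is correct and follows essentially the same route as the paper's: the forward direction is the trivial $\alpha=0$ case, and the converse rests on a second-order Taylor expansion along a coordinate direction with step $h=\epsilon^{m+N}$, chosen so that both the $h^{-1}\sup_{\Omega}\left\vert u_{\epsilon}\right\vert$ term and the $h\sup_{\Omega}\left\vert \partial_{i}^{2}u_{\epsilon}\right\vert$ term come out $O\left(\epsilon^{m}\right)$, followed by an induction on the order of differentiation (which the paper leaves implicit in the phrase ``it suffices to show that $\left(\partial_{i}u_{\epsilon}\right)_{\epsilon}$ satisfies the estimates''). If anything, your handling of the boundary is more careful than the paper's: the paper uses only the forward step $x+\epsilon^{N_{2}+m}e_{i}$ and justifies it by ``$\epsilon$ sufficiently small as $\Omega$ is a box,'' which does not cover points within distance $h$ of the right end of a finite factor $\mathbf{I}_{i}$, whereas your two-sided observation (for $h<L_{i}/2$ at least one of $x\pm he_{i}$ stays in the box) fixes exactly this; the only remaining cosmetic points are that your exponent $p$ should be taken as an integer $\geq 2m+N_{k+2}$ (the entries of $N$ need not be integers), and that for complex-valued $u_{\epsilon}$ the Lagrange-remainder form of Taylor's formula requires either passing to the integral remainder or, as the paper does, arguing on real and imaginary parts separately.
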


\begin{proof}
Suppose that $\left( u_{\epsilon }\right) _{\epsilon }$ $\in $ $\mathcal{E}_{%
\mathcal{S}^{\ast }}^{\mathcal{R}}\left( \Omega \right) $ satisfies $\left( %
\ref{5-1}\right) .$ It suffices to show that $\left( \partial
_{i}u_{\epsilon }\right) _{\epsilon }$ satisfies the $\mathcal{N}_{\mathcal{S%
}^{\ast }}\left( \Omega \right) $ estimates for all $i=1,..,n.$ Suppose that
$u_{\epsilon }$ is real valued, in the complex case, we shall carry out the
following calculus separately on its real and imaginary part. Let $m\in
\mathbb{Z}_{+},$ we have to show
\begin{equation*}
\underset{x\in \Omega }{\sup }\left\vert \partial _{i}u_{\epsilon }\left(
x\right) \right\vert =O\left( \epsilon ^{m}\right) ,\text{ }\epsilon
\longrightarrow 0
\end{equation*}%
Since $\left( u_{\epsilon }\right) _{\epsilon }$ $\in $ $\mathcal{E}_{%
\mathcal{S}^{\ast }}^{\mathcal{R}}\left( \Omega \right) ,$ then
\begin{equation}
\exists N\in \mathcal{R},\underset{x\in \Omega }{\sup }\left\vert \partial
_{i}^{2}u_{\epsilon }\left( x\right) \right\vert =O\left( \epsilon
^{-N_{2}}\right) ,\text{ }\epsilon \longrightarrow 0  \label{5-2}
\end{equation}%
Since $\left( u_{\epsilon }\right) _{\epsilon }$ satisfies $\left( \ref{5-1}%
\right) ,$ we have
\begin{equation}
\forall m\in \mathbb{Z}_{+},\underset{x\in \Omega }{\sup }\left\vert
u_{\epsilon }\left( x\right) \right\vert =O\left( \epsilon
^{N_{2}+2m}\right) ,\text{ }\epsilon \longrightarrow 0  \label{5-3}
\end{equation}%
By Taylor's formula, we have
\begin{equation*}
u_{\epsilon }\left( x+\epsilon ^{N_{2}+m}e_{i}\right) =u_{\epsilon }\left(
x\right) +\partial _{i}u_{\epsilon }\left( x\right) \epsilon ^{N_{2}+m}+%
\frac{1}{2}\partial _{i}^{2}u_{\epsilon }\left( x+\theta \epsilon
^{N_{2}+m}e_{i}\right) \epsilon ^{2\left( N_{2}+m\right) },
\end{equation*}%
where $\theta \in \left] 0,1\right[ $ and $\epsilon $ is sufficiently small
as $\Omega $ is a box$.$ It follows that

$\left\vert \partial _{i}u_{\epsilon }\left( x\right) \right\vert \leq
\underset{\left( \ast \right) }{\underbrace{\left\vert u_{\epsilon }\left(
x+\epsilon ^{N_{2}+m}e_{i}\right) \right\vert \epsilon ^{-N_{2}-m}}}+%
\underset{\left( \ast \ast \right) }{\underbrace{\left\vert u_{\epsilon
}\left( x\right) \right\vert \epsilon ^{-N_{2}-m}}}+\underset{\left( \ast
\ast \ast \right) }{\underbrace{\epsilon ^{N_{2}+m}\left\vert \partial
_{i}^{2}u_{\epsilon }\left( x+\theta \epsilon ^{N_{2}+m}e_{i}\right)
\right\vert }}$

From $\left( \ref{5-3}\right) ,$ we have $\left( \ast \right) $ and $\left(
\ast \ast \right) $ are of order $O\left( \epsilon ^{m}\right) ,$ $\epsilon
\longrightarrow 0;$ and from $\left( \ref{5-2}\right) ,$ we have $\left(
\ast \ast \ast \right) $ is of order $O\left( \epsilon ^{m}\right) ,$ $%
\epsilon \longrightarrow 0.$
\end{proof}

\begin{definition}
Let $\mathcal{R}$ be a regular subset of $\mathbb{R}_{+}^{\mathbb{Z}_{+}}$,
the algebra of \ $\mathcal{R-}$regular bounded generalized functions,
denoted $\mathcal{G}_{\mathcal{S}^{\ast }}^{\mathcal{R}}\left( \Omega
\right) ,$ is the quotient algebra
\begin{equation}
\mathcal{G}_{\mathcal{S}^{\ast }}^{\mathcal{R}}\left( \Omega \right) =\frac{%
\mathcal{E}_{\mathcal{S}^{\ast }}^{\mathcal{R}}\left( \Omega \right) }{%
\mathcal{N}_{\mathcal{S}^{\ast }}\left( \Omega \right) }
\end{equation}
\end{definition}

\begin{remark}
When $\mathcal{R}$\ is the set of all positive sequences the algebra $%
\mathcal{G}_{\mathcal{S}^{\ast }}^{\mathcal{R}}\left( \Omega \right) $\ is
denoted by $\mathcal{G}_{L^{\infty }}\left( \Omega \right) $ in \cite{BO}\
and \cite{CHO} as it is constructed on the differential algebra $%
D_{L^{\infty }}(\Omega )$\ of Schwartz \cite{Schwartz}. So it is more
correct to write $\mathcal{G}_{L^{\infty }}^{\mathcal{R}}\left( \Omega
\right) $\ instead of $\mathcal{G}_{\mathcal{S}^{\ast }}^{\mathcal{R}}\left(
\Omega \right) $. The null characterization of negligible elements of $%
\mathcal{G}_{L^{\infty }}\left( \Omega \right) $\ in the case $\Omega =%
\mathbb{R}^{n}$\ is given in \cite{Garetto2}.
\end{remark}

\section{The algebra of $\mathcal{R-}$regular roughly decreasing generalized
functions}

Let\
\begin{equation*}
\mathcal{S}_{\ast }\left( \Omega \right) =\left\{ f\in \mathcal{C}^{\infty
}\left( \Omega \right) :\forall \beta \in \mathbb{Z}_{+}^{n},\underset{x\in
\Omega }{\sup }\left\vert x^{\beta }f\left( x\right) \right\vert <\infty
\right\} ,
\end{equation*}%
and $\mathcal{R}$ be a regular subset of $\mathbb{R}_{+}^{\mathbb{Z}_{+}},$
if we define

\begin{equation*}
\mathcal{E}_{\mathcal{S}_{\ast }}^{\mathcal{R}}\left( \Omega \right)
=\left\{ \left( u_{\epsilon }\right) _{\epsilon }\in \mathcal{S}_{\ast
}\left( \Omega \right) ^{I}:\exists N\in \mathcal{R},\forall \beta \in
\mathbb{Z}_{+}^{n},\underset{x\in \Omega }{\sup }\left\vert x^{\beta
}u_{\epsilon }\left( x\right) \right\vert =O\left( \epsilon ^{-N_{\left\vert
\beta \right\vert }}\right) ,\epsilon \longrightarrow 0\right\} ,
\end{equation*}
\begin{equation*}
\mathcal{N}_{\mathcal{S}_{\ast }}^{\mathcal{R}}\left( \Omega \right)
=\left\{ \left( u_{\epsilon }\right) _{\epsilon }\in \mathcal{S}_{\ast
}\left( \Omega \right) ^{I}:\forall N\in \mathcal{R},\forall \beta \in
\mathbb{Z}_{+}^{n},\underset{x\in \Omega }{\sup }\left\vert x^{\beta
}u_{\epsilon }\left( x\right) \right\vert =O\left( \epsilon ^{N_{\left\vert
\beta \right\vert }}\right) ,\epsilon \longrightarrow 0\right\} ,
\end{equation*}
then the following properties of $\mathcal{E}_{\mathcal{S}_{\ast }}^{%
\mathcal{R}}\left( \Omega \right) $ and $\mathcal{N}_{\mathcal{S}_{\ast }}^{%
\mathcal{R}}\left( \Omega \right) $ are easy to verify.

\begin{proposition}
$\left( i\right) $ The space $\mathcal{E}_{\mathcal{S}_{\ast }}^{\mathcal{R}%
}\left( \Omega \right) $ is a subalgebra of $\mathcal{S}_{\ast }\left(
\Omega \right) ^{I}.$

$\left( ii\right) $ The space $\mathcal{N}_{\mathcal{S}_{\ast }}^{\mathcal{R}%
}\left( \Omega \right) $ is an ideal of $\mathcal{E}_{\mathcal{S}_{\ast }}^{%
\mathcal{R}}\left( \Omega \right) .$

$\left( iii\right) $ We have $\mathcal{N}_{\mathcal{S}_{\ast }}^{\mathcal{R}%
}\left( \Omega \right) =\mathcal{N}_{\mathcal{S}_{\ast }}\left( \Omega
\right) ,$ where
\begin{equation*}
\mathcal{N}_{\mathcal{S}_{\ast }}\left( \Omega \right) =\left\{ \left(
u_{\epsilon }\right) _{\epsilon }\in \mathcal{S}_{\ast }\left( \Omega
\right) ^{I}:\forall m\in \mathbb{Z}_{+},\forall \beta \in \mathbb{Z}%
_{+}^{n},\underset{x\in \Omega }{\sup }\left\vert x^{\beta }u_{\epsilon
}\left( x\right) \right\vert =O\left( \epsilon ^{m}\right) ,\epsilon
\longrightarrow 0\right\}
\end{equation*}
\end{proposition}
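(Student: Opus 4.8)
The plan is to verify the three assertions by the same sup-norm bookkeeping one would use for $\mathcal{E}_{\mathcal{S}^{\ast }}^{\mathcal{R}}\left( \Omega \right) $, with the essential simplification that the operation here is multiplication by the monomial $x^{\beta }$ rather than the derivative $\partial ^{\alpha }$. Since $x^{\beta }$ is a multiplier and not a differential operator, there is no Leibniz expansion to manage: for a product one simply places the whole factor $x^{\beta }$ on one of the two functions and estimates the other in sup-norm, i.e. $\sup_{x\in \Omega }\left\vert x^{\beta }u_{\epsilon }v_{\epsilon }\right\vert \leq \left( \sup_{x\in \Omega }\left\vert x^{\beta }u_{\epsilon }\right\vert \right) \left( \sup_{x\in \Omega }\left\vert v_{\epsilon }\right\vert \right) $, where the second factor is the $\beta =0$ seminorm of $v_{\epsilon }$. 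Throughout I use that for $\epsilon \in \left] 0,1\right] $ one has $\epsilon ^{a}\leq \epsilon ^{b}$ whenever $a\geq b$, which converts every bound $O\left( \epsilon ^{a}\right) $ with $a\geq b$ into $O\left( \epsilon ^{b}\right) $.

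For $\left( i\right) $, after noting that $\mathcal{S}_{\ast }\left( \Omega \right) $ is itself an algebra (the product bound above shows $fg\in \mathcal{S}_{\ast }\left( \Omega \right) $ for $f,g\in \mathcal{S}_{\ast }\left( \Omega \right) $), scaling by a constant is trivial and closure under addition follows from $\left( \ref{c2}\right) $: given representatives governed by $N,N^{\prime }\in \mathcal{R}$, pick $N^{\prime \prime }\in \mathcal{R}$ dominating both and bound the sum termwise. For the product of $\left( u_{\epsilon }\right) _{\epsilon }$ (with exponent sequence $N$) and $\left( v_{\epsilon }\right) _{\epsilon }$ (with $N^{\prime }$), the displayed inequality gives the bound $O\left( \epsilon ^{-N_{|\beta |}-N_{0}^{\prime }}\right) $; applying $\left( \ref{c3}\right) $ with $l_{1}=|\beta |$ and $l_{2}=0$ produces $N^{\prime \prime }\in \mathcal{R}$ with $N_{|\beta |}+N_{0}^{\prime }\leq N_{|\beta |}^{\prime \prime }$, so the product lies in $\mathcal{E}_{\mathcal{S}_{\ast }}^{\mathcal{R}}\left( \Omega \right) $.

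For $\left( ii\right) $, I first record that $\mathcal{N}_{\mathcal{S}_{\ast }}^{\mathcal{R}}\left( \Omega \right) \subseteq \mathcal{E}_{\mathcal{S}_{\ast }}^{\mathcal{R}}\left( \Omega \right) $ (take any single $N\in \mathcal{R}$ and use $\epsilon ^{N_{|\beta |}}\leq \epsilon ^{-N_{|\beta |}}$), so that the ideal statement is meaningful; additive closure is immediate, since for any fixed $N\in \mathcal{R}$ both summands obey the $\mathcal{N}$-estimate with that very $N$. The only genuine computation is absorption: given $\left( u_{\epsilon }\right) _{\epsilon }\in \mathcal{N}_{\mathcal{S}_{\ast }}^{\mathcal{R}}\left( \Omega \right) $ and $\left( v_{\epsilon }\right) _{\epsilon }\in \mathcal{E}_{\mathcal{S}_{\ast }}^{\mathcal{R}}\left( \Omega \right) $ governed by $M\in \mathcal{R}$, I fix an arbitrary target $N\in \mathcal{R}$ and $\beta $, apply $\left( \ref{c3}\right) $ to $N$ and $M$ (again with $l_{2}=0$) to obtain $N^{\prime \prime }\in \mathcal{R}$ with $N_{|\beta |}+M_{0}\leq N_{|\beta |}^{\prime \prime }$, invoke the $\mathcal{N}$-estimate for $u_{\epsilon }$ along this $N^{\prime \prime }$, and multiply by the $O\left( \epsilon ^{-M_{0}}\right) $ bound for $\sup_{x\in \Omega }\left\vert v_{\epsilon }\right\vert $ to land in $O\left( \epsilon ^{N_{|\beta |}}\right) $.

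Part $\left( iii\right) $ is where the regularity of $\mathcal{R}$ does the genuine work, and it is the step I expect to be the crux. The inclusion $\mathcal{N}_{\mathcal{S}_{\ast }}\left( \Omega \right) \subseteq \mathcal{N}_{\mathcal{S}_{\ast }}^{\mathcal{R}}\left( \Omega \right) $ is soft: given $N\in \mathcal{R}$ and $\beta $, round $N_{|\beta |}$ up to an integer and invoke the $\mathcal{N}_{\mathcal{S}_{\ast }}$-estimate at that integer. The converse $\mathcal{N}_{\mathcal{S}_{\ast }}^{\mathcal{R}}\left( \Omega \right) \subseteq \mathcal{N}_{\mathcal{S}_{\ast }}\left( \Omega \right) $ is the one point requiring a construction: given $m\in \mathbb{Z}_{+}$ and $\beta $, I must exhibit a single sequence in $\mathcal{R}$ whose $|\beta |$-th entry is at least $m$. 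Starting from any element of the non-void set $\mathcal{R}$ and applying $\left( \ref{c1}\right) $ with $k=0$ and $k^{\prime }=m$ yields such a sequence $N^{\prime }\in \mathcal{R}$ with $N_{l}^{\prime }\geq m$ for every $l$; feeding $N^{\prime }$ into the $\mathcal{N}_{\mathcal{S}_{\ast }}^{\mathcal{R}}$-hypothesis gives $\sup_{x\in \Omega }\left\vert x^{\beta }u_{\epsilon }\right\vert =O\left( \epsilon ^{N_{|\beta |}^{\prime }}\right) =O\left( \epsilon ^{m}\right) $. No single estimate is hard; the only thing to get right is the interplay of the ``$\forall N\in \mathcal{R}$'' and ``$\forall m\in \mathbb{Z}_{+}$'' quantifiers, and that is precisely what $\left( \ref{c1}\right) $ together with the non-emptiness of $\mathcal{R}$ resolves.
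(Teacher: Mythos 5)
Your proposal is correct and complete; the paper itself offers no argument for this proposition (it is declared "easy to verify"), and your verification is precisely the routine one intended, invoking (R2) for sums, (R3) with $l_{2}=0$ for products and for absorption into the ideal, and (R1) together with the non-emptiness of $\mathcal{R}$ to pass between the "$\forall N\in \mathcal{R}$" and "$\forall m\in \mathbb{Z}_{+}$" formulations in part $\left( iii\right) $. Nothing further is needed.
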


The following proposition characterizes $\mathcal{N}_{\mathcal{S}_{\ast
}}\left( \Omega \right) $

\begin{proposition}
\label{prop.null}Let $\left( u_{\epsilon }\right) _{\epsilon }$ $\in
\mathcal{E}_{\mathcal{S}_{\ast }}^{\mathcal{R}}\left( \Omega \right) ,$ then
$\left( u_{\epsilon }\right) _{\epsilon }$ $\in \mathcal{N}_{\mathcal{S}%
_{\ast }}\left( \Omega \right) $ if and only if the following condition is
satisfied
\begin{equation}
\forall m\in \mathbb{Z}_{+},\underset{x\in \Omega }{\sup }\left\vert
u_{\epsilon }\left( x\right) \right\vert =O\left( \epsilon ^{m}\right) \text{
},\text{ }\epsilon \longrightarrow 0  \label{Ns Garetto}
\end{equation}
\end{proposition}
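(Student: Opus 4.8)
The plan is to prove the two implications separately, the forward one being immediate and the reverse one carrying the entire content. First I record exactly what must be checked: the ideal $\mathcal{N}_{\mathcal{S}_{\ast }}(\Omega )$ is defined purely through the polynomial-moment seminorms $\sup_{x\in \Omega }|x^{\beta }\,\cdot \,|$, so — in contrast with the bounded algebra of Proposition \ref{propnull1}, whose seminorms control the derivatives $\partial ^{\alpha }$ and which therefore needed a Taylor expansion — there are \emph{no} derivative estimates in the definition of $\mathcal{N}_{\mathcal{S}_{\ast }}(\Omega )$; the only quantities to be controlled are the weighted suprema $\sup_{x\in \Omega }|x^{\beta }u_{\epsilon }(x)|$ over all $\beta \in \mathbb{Z}_{+}^{n}$ and all orders. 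The forward implication is then trivial: if $(u_{\epsilon })_{\epsilon }\in \mathcal{N}_{\mathcal{S}_{\ast }}(\Omega )$, specialising the defining estimate to $\beta =0$ is exactly \eqref{Ns Garetto}.

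For the reverse implication, assume $(u_{\epsilon })_{\epsilon }\in \mathcal{E}_{\mathcal{S}_{\ast }}^{\mathcal{R}}(\Omega )$ satisfies \eqref{Ns Garetto}; I must upgrade the rapid decay of the zeroth moment to rapid decay of every moment. The crux is the elementary pointwise identity, valid for every multi-index $\beta $,
\[
|x^{\beta }u_{\epsilon }(x)|^{2}=|x^{2\beta }u_{\epsilon }(x)|\cdot |u_{\epsilon }(x)|,\qquad x\in \Omega ,
\]
which holds because $|x^{\beta }|^{2}=|x^{2\beta }|$, where $2\beta =(2\beta _{1},\dots ,2\beta _{n})$ is again a multi-index. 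Taking suprema and bounding the supremum of a product by the product of the suprema yields the moment-interpolation (logarithmic-convexity) estimate
\[
\sup_{x\in \Omega }|x^{\beta }u_{\epsilon }(x)|\leq \Big(\sup_{x\in \Omega }|x^{2\beta }u_{\epsilon }(x)|\Big)^{1/2}\Big(\sup_{x\in \Omega }|u_{\epsilon }(x)|\Big)^{1/2},
\]
which interpolates the $\beta $-moment between the $2\beta $-moment and the zeroth moment.

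Now I feed the two available bounds into this inequality. Since $(u_{\epsilon })_{\epsilon }\in \mathcal{E}_{\mathcal{S}_{\ast }}^{\mathcal{R}}(\Omega )$ there is a \emph{fixed} $N\in \mathcal{R}$ with $\sup_{x\in \Omega }|x^{2\beta }u_{\epsilon }(x)|=O(\epsilon ^{-N_{2|\beta |}})$, while \eqref{Ns Garetto} gives $\sup_{x\in \Omega }|u_{\epsilon }(x)|=O(\epsilon ^{m})$ for every $m\in \mathbb{Z}_{+}$. Substituting, for each $\beta $ and each $m$,
\[
\sup_{x\in \Omega }|x^{\beta }u_{\epsilon }(x)|=O\big(\epsilon ^{(m-N_{2|\beta |})/2}\big),\qquad \epsilon \longrightarrow 0.
\]
Given an arbitrary target order $M\in \mathbb{Z}_{+}$, I choose any integer $m\geq 2M+N_{2|\beta |}$, so that $(m-N_{2|\beta |})/2\geq M$ and hence $\sup_{x\in \Omega }|x^{\beta }u_{\epsilon }(x)|=O(\epsilon ^{M})$. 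As $M$ and $\beta $ are arbitrary, this is precisely membership in $\mathcal{N}_{\mathcal{S}_{\ast }}(\Omega )$.

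The point I would stress is where the difficulty actually sits. In the $\mathcal{S}_{\ast }$ setting the seminorms are multiplicative rather than differential, so the reverse implication collapses to the single identity above, and — unlike Proposition \ref{propnull1} — no Taylor/derivative machinery is needed or even available. The only mild bookkeeping is that the exponent gain $(m-N_{2|\beta |})/2$ depends on $\beta $ through the fixed number $N_{2|\beta |}$; but since $N$ is fixed once and for all while $m$ ranges over all of $\mathbb{Z}_{+}$, this dependence is harmless, and in particular the regularity axioms (R1)--(R3) are not invoked in this characterization.
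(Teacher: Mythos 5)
Your proof is correct and takes essentially the same route as the paper's: the identical key identity $\left\vert x^{\beta }u_{\epsilon }(x)\right\vert ^{2}=\left\vert x^{2\beta }u_{\epsilon }(x)\right\vert \left\vert u_{\epsilon }(x)\right\vert$, combined with the fixed moderateness bound $O\left( \epsilon ^{-N_{2\left\vert \beta \right\vert }}\right)$ for the $2\beta$-moment and the arbitrarily-high-order decay of the zeroth moment from \eqref{Ns Garetto}. The only difference is bookkeeping — the paper absorbs $N_{2\left\vert \beta \right\vert }$ directly into the exponent when invoking the hypothesis, whereas you keep $m$ free and choose it at the end — which changes nothing of substance.
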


\begin{proof}
Suppose that $\left( u_{\epsilon }\right) _{\epsilon }$ $\in $ $\mathcal{E}_{%
\mathcal{S}_{\ast }}^{\mathcal{R}}\left( \Omega \right) $ satisfies $\left( %
\ref{Ns Garetto}\right) ,$ since $\left( u_{\epsilon }\right) _{\epsilon }$ $%
\in $ $\mathcal{E}_{\mathcal{S}_{\ast }}^{\mathcal{R}}\left( \Omega \right) ,
$ then $\exists N\in \mathcal{R},$ $\forall \beta \in $ $\mathbb{Z}_{+}^{n},$%
\begin{equation*}
\underset{x\in \Omega }{\sup }\left\vert x^{2\beta }u_{\epsilon }\left(
x\right) \right\vert =O\left( \epsilon ^{-N_{2\left\vert \beta \right\vert
}}\right) \text{ },\text{ }\epsilon \longrightarrow 0
\end{equation*}%
From $\left( \ref{Ns Garetto}\right) ,$ we have
\begin{equation*}
\forall m\in \mathbb{Z}_{+},\underset{x\in \Omega }{\sup }\left\vert
u_{\epsilon }\left( x\right) \right\vert =O\left( \epsilon
^{2m+N_{2\left\vert \beta \right\vert }}\right) ,\text{ }\epsilon
\longrightarrow 0
\end{equation*}%
Therefore $\forall x\in \Omega ,$%
\begin{equation*}
\left\vert x^{\beta }u_{\epsilon }\left( x\right) \right\vert
^{2}=\left\vert x^{2\beta }u_{\epsilon }\left( x\right) \right\vert
\left\vert u_{\epsilon }\left( x\right) \right\vert =O\left( \epsilon
^{2m}\right) ,\text{ }\epsilon \longrightarrow 0,
\end{equation*}%
hence
\begin{equation*}
\forall m\in \mathbb{Z}_{+},\left\vert x^{\beta }u_{\epsilon }\left(
x\right) \right\vert =O\left( \epsilon ^{m}\right) ,\text{ }\epsilon
\longrightarrow 0
\end{equation*}
\end{proof}

\begin{definition}
Let $\mathcal{R}$ be a regular subset of $\mathbb{R}_{+}^{\mathbb{Z}_{+}}$,
the algebra of $\mathcal{R-}$regular roughly decreasing generalized
functions, denoted $\mathcal{G}_{\mathcal{S}_{\ast }}^{\mathcal{R}}\left(
\Omega \right) ,$ is the quotient \ algebra
\begin{equation*}
\mathcal{G}_{\mathcal{S}_{\ast }}^{\mathcal{R}}\left( \Omega \right) =\frac{%
\mathcal{E}_{\mathcal{S}_{\ast }}^{\mathcal{R}}\left( \Omega \right) }{%
\mathcal{N}_{\mathcal{S}_{\ast }}\left( \Omega \right) }
\end{equation*}
\end{definition}

\begin{remark}
The $C^{\infty }$\ regularity in the definition of elements of $\mathcal{G}_{%
\mathcal{S}_{\ast }}^{\mathcal{R}}\left( \Omega \right) $\ is not in fact
needed in the proof of the principal results of this work.
\end{remark}

\section{The algebra of $\mathcal{R-}$regular rapidly decreasing generalized
functions}

Let
\begin{equation*}
\mathcal{S}\left( \Omega \right) =\left\{ f\in \mathcal{C}^{\infty }\left(
\Omega \right) :\forall (\alpha ,\beta )\in \mathbb{Z}_{+}^{2n},\underset{%
x\in \Omega }{\sup }\left\vert x^{\beta }\partial ^{\alpha }f\left( x\right)
\right\vert <\infty \right\} ,
\end{equation*}%
called the space of rapidly decreasing functions on $\Omega $, and let $%
\widetilde{\mathcal{R}}$ be a regular subset of $\mathbb{R}_{+}^{\mathbb{Z}%
_{+}^{2}},$ if we define
\begin{equation*}
\mathcal{E}_{\mathcal{S}}^{\widetilde{\mathcal{R}}}\left( \Omega \right)
=\left\{ \left( u_{\epsilon }\right) _{\epsilon }\in \mathcal{S}\left(
\Omega \right) ^{I}:\exists N\in \widetilde{\mathcal{R}},\forall (\alpha
,\beta )\in \mathbb{Z}_{+}^{2n},\underset{x\in \Omega }{\sup }\left\vert
x^{\beta }\partial ^{\alpha }u_{\epsilon }\left( x\right) \right\vert
=O\left( \epsilon ^{-N_{\left\vert \alpha \right\vert ,\left\vert \beta
\right\vert }}\right) ,\epsilon \longrightarrow 0\right\} ,
\end{equation*}%
\begin{equation*}
\mathcal{N}_{\mathcal{S}}^{\widetilde{\mathcal{R}}}\left( \Omega \right)
=\left\{ \left( u_{\epsilon }\right) _{\epsilon }\in \mathcal{S}\left(
\Omega \right) ^{I}:\forall N\in \widetilde{\mathcal{R}},\forall (\alpha
,\beta )\in \mathbb{Z}_{+}^{2n},\underset{x\in \Omega }{\sup }\left\vert
x^{\beta }\partial ^{\alpha }u_{\epsilon }\left( x\right) \right\vert
=O\left( \epsilon ^{N_{\left\vert \alpha \right\vert ,\left\vert \beta
\right\vert }}\right) ,\epsilon \longrightarrow 0\right\} ,
\end{equation*}%
then we have the following results.

\begin{proposition}
\label{prop8}We have the following assertions

$\left( i\right) $ The space $\mathcal{E}_{\mathcal{S}}^{\widetilde{\mathcal{%
R}}}\left( \Omega \right) $ is a subalgebra of $\mathcal{S}\left( \Omega
\right) ^{I}.$

$\left( ii\right) $ The space $\mathcal{N}_{\mathcal{S}}^{\widetilde{%
\mathcal{R}}}\left( \Omega \right) $ is an ideal of $\mathcal{E}_{\mathcal{S}%
}^{\widetilde{\mathcal{R}}}\left( \Omega \right) .$

$\left( iii\right) $ We have $\mathcal{N}_{\mathcal{S}}^{\widetilde{\mathcal{%
R}}}\left( \Omega \right) =\mathcal{N}_{\mathcal{S}}\left( \Omega \right) $,
where
\begin{equation*}
\mathcal{N}_{\mathcal{S}}\left( \Omega \right) =\left\{ \left( u_{\epsilon
}\right) _{\epsilon }\in \mathcal{S}\left( \Omega \right) ^{I}:\forall m\in
\mathbb{Z}_{+},\forall (\alpha ,\beta )\in \mathbb{Z}_{+}^{2n},\underset{%
x\in \Omega }{\sup }\left\vert x^{\beta }\partial ^{\alpha }u_{\epsilon
}\left( x\right) \right\vert =O\left( \epsilon ^{m}\right) ,\epsilon
\longrightarrow 0\right\}
\end{equation*}
\end{proposition}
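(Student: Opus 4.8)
The plan is to treat the three assertions in the order (i), (iii), (ii), since the ideal property in (ii) becomes transparent once the negligible set has been given the simple ``$\forall m\in\mathbb{Z}_{+}$'' description of (iii). Throughout I would work inside the algebra $\mathcal{S}\left( \Omega \right) ^{I}$ of nets, recalling that $\mathcal{S}\left( \Omega \right) $ is itself an algebra (sums and products of rapidly decreasing functions are rapidly decreasing), so that only the $\epsilon $-estimates need checking.

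For (i), the vector-space structure reduces to closure under addition: given $\left( u_{\epsilon }\right) _{\epsilon },\left( v_{\epsilon }\right) _{\epsilon }\in \mathcal{E}_{\mathcal{S}}^{\widetilde{\mathcal{R}}}\left( \Omega \right) $ with moderateness sequences $N,N^{\prime }\in \widetilde{\mathcal{R}}$, I would invoke $\left( \widetilde{R}2\right) $ to produce $N^{\prime \prime }\in \widetilde{\mathcal{R}}$ dominating both entrywise; since $\epsilon \leq 1$ and $N^{\prime \prime }\geq N,N^{\prime }$, the two summands' bounds $O(\epsilon ^{-N_{\left\vert \alpha \right\vert ,\left\vert \beta \right\vert }})$ and $O(\epsilon ^{-N_{\left\vert \alpha \right\vert ,\left\vert \beta \right\vert }^{\prime }})$ are both absorbed into $O(\epsilon ^{-N_{\left\vert \alpha \right\vert ,\left\vert \beta \right\vert }^{\prime \prime }})$. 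For the product I would expand $x^{\beta }\partial ^{\alpha }\left( u_{\epsilon }v_{\epsilon }\right) $ by the Leibniz rule and, in each term $\binom{\alpha }{\gamma }\left( x^{\beta }\partial ^{\gamma }u_{\epsilon }\right) \left( \partial ^{\alpha -\gamma }v_{\epsilon }\right) $, place the \emph{entire} weight $x^{\beta }$ on the first factor, so that $\left\vert x^{\beta }\partial ^{\gamma }u_{\epsilon }\right\vert =O(\epsilon ^{-N_{\left\vert \gamma \right\vert ,\left\vert \beta \right\vert }})$ and $\left\vert \partial ^{\alpha -\gamma }v_{\epsilon }\right\vert =O(\epsilon ^{-N_{\left\vert \alpha -\gamma \right\vert ,0}^{\prime }})$. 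The point is that the index bookkeeping $\left\vert \gamma \right\vert +\left\vert \alpha -\gamma \right\vert =\left\vert \alpha \right\vert $ and $\left\vert \beta \right\vert +0=\left\vert \beta \right\vert $ matches exactly the hypothesis of $\left( \widetilde{R}3\right) $, which therefore furnishes a single $N^{\prime \prime }\in \widetilde{\mathcal{R}}$ with $N_{\left\vert \gamma \right\vert ,\left\vert \beta \right\vert }+N_{\left\vert \alpha -\gamma \right\vert ,0}^{\prime }\leq N_{\left\vert \alpha \right\vert ,\left\vert \beta \right\vert }^{\prime \prime }$ for every $\gamma $; summing the finitely many terms preserves the bound $O(\epsilon ^{-N_{\left\vert \alpha \right\vert ,\left\vert \beta \right\vert }^{\prime \prime }})$, so the product lies in $\mathcal{E}_{\mathcal{S}}^{\widetilde{\mathcal{R}}}\left( \Omega \right) $.

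For (iii) I would prove the two inclusions directly. The inclusion $\mathcal{N}_{\mathcal{S}}\left( \Omega \right) \subseteq \mathcal{N}_{\mathcal{S}}^{\widetilde{\mathcal{R}}}\left( \Omega \right) $ is immediate: given $N\in \widetilde{\mathcal{R}}$ and $\left( \alpha ,\beta \right) $, choosing an integer $m\geq N_{\left\vert \alpha \right\vert ,\left\vert \beta \right\vert }$ turns the bound $O(\epsilon ^{m})$ into $O(\epsilon ^{N_{\left\vert \alpha \right\vert ,\left\vert \beta \right\vert }})$ because $\epsilon \leq 1$. For the reverse inclusion, given $m\in \mathbb{Z}_{+}$ and $\left( \alpha ,\beta \right) $, I would apply $\left( \widetilde{R}1\right) $ with $k=k^{\prime }=0$ and $k^{\prime \prime }=m$ to some fixed element of the (nonvoid) set $\widetilde{\mathcal{R}}$, obtaining $N\in \widetilde{\mathcal{R}}$ with $N_{q,l}\geq m$ for all $\left( q,l\right) $; the defining estimate of $\mathcal{N}_{\mathcal{S}}^{\widetilde{\mathcal{R}}}\left( \Omega \right) $ for this $N$ then gives $O(\epsilon ^{N_{\left\vert \alpha \right\vert ,\left\vert \beta \right\vert }})\subseteq O(\epsilon ^{m})$. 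Hence the two sets coincide, and in particular a negligible net may henceforth be estimated by a single integer power.

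Finally, for (ii), using (iii) I would show that $\mathcal{N}_{\mathcal{S}}\left( \Omega \right) $ is an ideal of $\mathcal{E}_{\mathcal{S}}^{\widetilde{\mathcal{R}}}\left( \Omega \right) $. The inclusion $\mathcal{N}_{\mathcal{S}}\left( \Omega \right) \subseteq \mathcal{E}_{\mathcal{S}}^{\widetilde{\mathcal{R}}}\left( \Omega \right) $ and closure under addition are trivial (take $m=0$ and any $N\in \widetilde{\mathcal{R}}$ for the former; the ``$\forall m$'' clause is stable under sums for the latter). The substantive point is absorption: if $\left( u_{\epsilon }\right) _{\epsilon }\in \mathcal{E}_{\mathcal{S}}^{\widetilde{\mathcal{R}}}\left( \Omega \right) $ has moderateness sequence $M\in \widetilde{\mathcal{R}}$ and $\left( v_{\epsilon }\right) _{\epsilon }\in \mathcal{N}_{\mathcal{S}}\left( \Omega \right) $, then expanding $x^{\beta }\partial ^{\alpha }\left( u_{\epsilon }v_{\epsilon }\right) $ by Leibniz and keeping $x^{\beta }$ on the \emph{negligible} factor yields terms $\binom{\alpha }{\gamma }\left( \partial ^{\gamma }u_{\epsilon }\right) \left( x^{\beta }\partial ^{\alpha -\gamma }v_{\epsilon }\right) $ bounded by $O(\epsilon ^{-M_{\left\vert \gamma \right\vert ,0}})\,O(\epsilon ^{m})$ for \emph{every} $m$. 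Since there are only finitely many $\gamma \leq \alpha $, I would fix the target order $m^{\prime }$ and choose an integer $m\geq m^{\prime }+\max_{\gamma \leq \alpha }M_{\left\vert \gamma \right\vert ,0}$, making each term $O(\epsilon ^{m^{\prime }})$; this is exactly where the $\mathcal{N}_{\mathcal{S}}\left( \Omega \right) $-form obtained in (iii) is indispensable, since one integer $m$ can absorb the finite maximum of the moderateness exponents. The finite Leibniz sum then gives $O(\epsilon ^{m^{\prime }})$ for all $m^{\prime }$, so $\left( u_{\epsilon }v_{\epsilon }\right) _{\epsilon }\in \mathcal{N}_{\mathcal{S}}\left( \Omega \right) $. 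The only genuine obstacle is this bookkeeping in the product estimates, namely matching the derivative/weight decomposition to $\left( \widetilde{R}3\right) $ in (i) and controlling the finitely many growth exponents $M_{\left\vert \gamma \right\vert ,0}$ in (ii); everything else is routine domination of powers of $\epsilon $.
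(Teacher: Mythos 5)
Your proposal is correct and matches the paper's intent exactly: the paper's own ``proof'' of this proposition is the single remark that it follows from the properties of the set $\widetilde{\mathcal{R}}$, and your write-up is precisely that argument, with $(\widetilde{R}2)$ handling sums, $(\widetilde{R}3)$ (with the weight $x^{\beta}$ placed on a single Leibniz factor) handling products, and $(\widetilde{R}1)$ together with the nonnegativity of the sequences yielding the identification $\mathcal{N}_{\mathcal{S}}^{\widetilde{\mathcal{R}}}\left( \Omega \right) =\mathcal{N}_{\mathcal{S}}\left( \Omega \right) $. There is nothing to correct; you have simply supplied the details the paper omits.
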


\begin{proof}
The proof is not difficult, it follows from the properties of the set $%
\widetilde{\mathcal{R}}$.
\end{proof}

\begin{definition}
Let $\widetilde{\mathcal{R}}$ be a regular subset of $\mathbb{R}_{+}^{%
\mathbb{Z}_{+}^{2}},$ the algebra of $\widetilde{\mathcal{R}}\mathcal{-}$%
regular rapidly decreasing generalized functions on $\Omega $, denoted by $%
\mathcal{G}_{\mathcal{S}}^{\widetilde{\mathcal{R}}}\left( \Omega \right) ,$
is the quotient algebra
\begin{equation*}
\mathcal{G}_{\mathcal{S}}^{\widetilde{\mathcal{R}}}\left( \Omega \right) =%
\frac{\mathcal{E}_{\mathcal{S}}^{\widetilde{\mathcal{R}}}\left( \Omega
\right) }{\mathcal{N}_{\mathcal{S}}\left( \Omega \right) }
\end{equation*}
\end{definition}

\begin{example}
$\left( i\right) $ For $\widetilde{\mathcal{R}}=\mathbb{R}_{+}^{\mathbb{Z}%
_{+}^{2}},$ we obtain the algebra $\mathcal{G}_{\mathcal{S}}\left( \Omega
\right) $ of rapidly decreasing generalized functions on $\Omega $, see \cite%
{Grosser}.

$\left( ii\right) $ Pour $\widetilde{\mathcal{R}}=\widetilde{\mathcal{B}},$
we obtain the algebra $\mathcal{G}_{\mathcal{S}}^{\infty }\left( \Omega
\right) $ of regular rapidly decreasing generalized functions on $\Omega ,$
see \cite{Garetto}.
\end{example}

\section{Characterization of $\mathcal{R-}$regular rapidly decreasing
generalized functions}

Let us mention that the theorem of \cite{SY} can be extended to an open
subset $\Omega $ of $\mathbb{R}^{n}$ provided $\Omega $ is a box.

\begin{proposition}
If $\Omega $ is a box of $\mathbb{R}^{n}$, then
\begin{equation}
\mathcal{S}\left( \Omega \right) \mathcal{=S}^{\ast }\left( \Omega \right)
\cap \mathcal{S}_{\ast }\left( \Omega \right) \text{ }  \label{(*)}
\end{equation}
\end{proposition}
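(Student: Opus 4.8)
The plan is to prove the two inclusions separately. The inclusion $\mathcal{S}\left( \Omega \right) \subseteq \mathcal{S}^{\ast }\left( \Omega \right) \cap \mathcal{S}_{\ast }\left( \Omega \right)$ is immediate: given $f\in \mathcal{S}\left( \Omega \right)$, specializing the defining estimate $\sup_{\Omega }\left\vert x^{\beta }\partial ^{\alpha }f\right\vert <\infty$ to $\beta =0$ shows $f\in \mathcal{S}^{\ast }\left( \Omega \right)$, and to $\alpha =0$ shows $f\in \mathcal{S}_{\ast }\left( \Omega \right)$. All the work is in the reverse inclusion $\mathcal{S}^{\ast }\left( \Omega \right) \cap \mathcal{S}_{\ast }\left( \Omega \right) \subseteq \mathcal{S}\left( \Omega \right)$: given $f$ with every derivative $\partial ^{\alpha }f$ bounded and every product $x^{\beta }f$ bounded, one must control the mixed quantities $x^{\beta }\partial ^{\alpha }f$. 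I would first record the elementary equivalence that $f\in \mathcal{S}_{\ast }\left( \Omega \right)$ if and only if $\sup_{\Omega }\langle x\rangle ^{M}\left\vert f\left( x\right) \right\vert <\infty$ for every $M\in \mathbb{Z}_{+}$, where $\langle x\rangle =\left( 1+\left\vert x\right\vert ^{2}\right) ^{1/2}$; this follows from $\left\vert x^{\beta }\right\vert \leq \langle x\rangle ^{\left\vert \beta \right\vert }$ and, conversely, from expanding $\langle x\rangle ^{M}$ into finitely many monomials $x^{2\gamma }$. The point is that $\langle \cdot \rangle$ is everywhere positive and slowly varying: $\left\vert x-y\right\vert \leq 1$ forces $\langle x\rangle \leq \sqrt{3}\,\langle y\rangle$.

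The heart of the argument is a one-derivative interpolation lemma, proved in the spirit of Proposition \ref{propnull1}: if $g\in \mathcal{C}^{\infty }\left( \Omega \right)$ is rapidly decreasing (all $x^{\beta }g$ bounded) and has $\partial _{i}^{2}g$ bounded, then $x^{\beta }\partial _{i}g$ is bounded for every $\beta$. To prove it, fix $\beta$ and apply Taylor's formula in the direction $e_{i}$ with a position-dependent step $h=h\left( x\right) =c\,\langle x\rangle ^{-\left( \left\vert \beta \right\vert +1\right) }$,
\begin{equation*}
g\left( x+she_{i}\right) =g\left( x\right) +sh\,\partial _{i}g\left( x\right) +\tfrac{1}{2}h^{2}\,\partial _{i}^{2}g\left( x+\theta she_{i}\right) ,\qquad \theta \in \left] 0,1\right[ ,
\end{equation*}
where the sign $s=\pm 1$ is chosen so that $x+she_{i}\in \Omega$. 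Because $\Omega$ is a box, each coordinate interval $\mathbf{I}_{i}$ has positive length, and taking the constant $c$ smaller than half of the (finite) lengths guarantees that for every $x$ at least one sign keeps $x+she_{i}$ inside $\Omega$; this is precisely where the box hypothesis enters.

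Solving for $\partial _{i}g\left( x\right)$ and multiplying by $x^{\beta }$ yields
\begin{equation*}
\left\vert x^{\beta }\partial _{i}g\left( x\right) \right\vert \leq \frac{\left\vert x^{\beta }\right\vert }{h}\bigl( \left\vert g\left( x+she_{i}\right) \right\vert +\left\vert g\left( x\right) \right\vert \bigr) +\frac{h}{2}\left\vert x^{\beta }\right\vert \,\left\Vert \partial _{i}^{2}g\right\Vert _{\infty }.
\end{equation*}
With $h=c\langle x\rangle ^{-\left( \left\vert \beta \right\vert +1\right) }$ and $\left\vert x^{\beta }\right\vert \leq \langle x\rangle ^{\left\vert \beta \right\vert }$, the last term is $\leq \tfrac{c}{2}\langle x\rangle ^{-1}\left\Vert \partial _{i}^{2}g\right\Vert _{\infty }$, hence bounded, while the first term is dominated by $C\langle x\rangle ^{2\left\vert \beta \right\vert +1}\bigl( \left\vert g\left( x+she_{i}\right) \right\vert +\left\vert g\left( x\right) \right\vert \bigr)$, which is bounded since $g$ is rapidly decreasing and $\langle \cdot \rangle$ is slowly varying, so the weight at $x+she_{i}$ is comparable to that at $x$. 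The main obstacle is exactly this remainder term: with a fixed step it carries the unbounded factor $x^{\beta }$ against only a bounded $\partial _{i}^{2}g$, so the step must shrink like $\langle x\rangle ^{-\left( \left\vert \beta \right\vert +1\right) }$ to absorb the weight, which is feasible only because a box permits free translation in each coordinate.

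Finally I would close by induction on $\left\vert \alpha \right\vert$. For $\left\vert \alpha \right\vert =0$ the estimate $\sup_{\Omega }\left\vert x^{\beta }f\right\vert <\infty$ is the hypothesis $f\in \mathcal{S}_{\ast }\left( \Omega \right)$. For the inductive step write $\alpha =\alpha ^{\prime }+e_{i}$ and set $g=\partial ^{\alpha ^{\prime }}f$: the inductive hypothesis gives that $x^{\beta }g$ is bounded for all $\beta$ (so $g$ is rapidly decreasing), while $\partial _{i}^{2}g=\partial ^{\alpha ^{\prime }+2e_{i}}f$ is bounded because $f\in \mathcal{S}^{\ast }\left( \Omega \right)$. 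The lemma then yields boundedness of $x^{\beta }\partial _{i}g=x^{\beta }\partial ^{\alpha }f$ for every $\beta$, completing the induction and hence the inclusion.
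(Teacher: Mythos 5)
Your proof is correct, and it is genuinely more than what the paper records: the paper's entire proof of this proposition is the sentence that the proof ``is the same as in \cite{SY}'', plus the remark that in Taylor's expansion the box hypothesis keeps $\left( x_{1}+h,x^{\prime }\right) $ inside $\Omega $. Your argument is a complete, self-contained instantiation of that Taylor-expansion strategy: the one-derivative interpolation lemma (boundedness of all $x^{\beta }g$ together with boundedness of $\partial _{i}^{2}g$ implies boundedness of all $x^{\beta }\partial _{i}g$), proved with the position-dependent step $h\left( x\right) =c\left\langle x\right\rangle ^{-\left( \left\vert \beta \right\vert +1\right) }$, followed by induction on $\left\vert \alpha \right\vert $; the estimates all check out, since the shrinking step absorbs the polynomial weight against the merely bounded second derivative, while the weight at the shifted point is comparable to the weight at $x$. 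Two respects in which your write-up is actually more careful than the paper's gloss: first, the paper's parenthetical claim that $\left( x_{1}+h,x^{\prime }\right) $ stays in $\Omega $ for \emph{all} $\left( x_{1},x^{\prime }\right) \in \Omega $ and all sufficiently small fixed $h>0$ is false when $\mathbf{I}_{1}$ is a bounded interval; your device of choosing a sign $s=\pm 1$ (available because $h\left( x\right) \leq c$ and $c$ is less than half the length of any finite $\mathbf{I}_{i}$) is the correct repair, playing the role of the reflection $v_{\epsilon }\left( x\right) =u_{\epsilon }\left( -x_{1},x^{\prime }\right) $ that the paper uses in the proof of Theorem \ref{th10}. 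Second, your slowly-varying bound $\left\langle x\right\rangle \leq \sqrt{3}\left\langle x+she_{i}\right\rangle $ compares weights in both directions at once, avoiding the case distinction $x_{1}>0$, $x_{1}<0$ on which the paper's analogous arguments rely. One cosmetic point: you should also impose $c\leq 1$ (or adjust the constant $\sqrt{3}$ accordingly), so that the step size is at most $1$ and the slowly-varying inequality applies exactly as you state it.
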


\begin{proof}
The proof is the same as in \cite{SY}, noting that, in Taylor's expansion,
the hypothesis that $\Omega $ is a box assures that $\left(
x_{1}+h,x^{\prime }\right) $ stays in $\Omega $ for all $\left(
x_{1},x^{\prime }\right) \in \Omega $ and $h>0$ sufficiently small$.$
\end{proof}

The principal result of this section is an extension of (\ref{(*)})\ to the
algebra of $\widetilde{\mathcal{R}}$-regular rapidly decreasing generalized
functions. It is the first characterization of the algebra $\mathcal{G}_{%
\mathcal{S}}^{\widetilde{\mathcal{R}}}\left( \Omega \right) $.

\begin{theorem}
\label{th10}If $\Omega $ is a box, then
\begin{equation*}
\mathcal{G}_{\mathcal{S}}^{\widetilde{\mathcal{R}}}\left( \Omega \right) =%
\mathcal{G}_{\mathcal{S}_{\ast }}^{\mathcal{R}_{0}}\left( \Omega \right)
\cap \mathcal{G}_{\mathcal{S}^{\ast }}^{\mathcal{R}^{0}}\left( \Omega \right)
\end{equation*}
\end{theorem}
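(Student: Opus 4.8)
The plan is to reduce the asserted identity of quotient algebras to two separate facts: a coincidence of the three ideals of negligible nets, and an identity at the level of the spaces of moderate nets. For the first, observe that by Proposition \ref{prop8}$(iii)$ a net $\left( u_{\epsilon }\right) _{\epsilon }\in \mathcal{E}_{\mathcal{S}}^{\widetilde{\mathcal{R}}}\left( \Omega \right) $ lies in $\mathcal{N}_{\mathcal{S}}\left( \Omega \right) $ exactly when $\sup_{x}\left\vert u_{\epsilon }\left( x\right) \right\vert =O\left( \epsilon ^{m}\right) $ for all $m$. Since $\Omega $ is a box, Proposition \ref{propnull1} gives the same $L^{\infty }$-characterization of $\mathcal{N}_{\mathcal{S}^{\ast }}\left( \Omega \right) $ on $\mathcal{E}_{\mathcal{S}^{\ast }}^{\mathcal{R}^{0}}\left( \Omega \right) $, and Proposition \ref{prop.null} does so for $\mathcal{N}_{\mathcal{S}_{\ast }}\left( \Omega \right) $ on $\mathcal{E}_{\mathcal{S}_{\ast }}^{\mathcal{R}_{0}}\left( \Omega \right) $. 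Consequently, once two nets are moderate in the relevant sense, being null is the very same condition in all three settings, so two representatives of a common class differ by a net that is simultaneously negligible in each ideal; this lets me compute the intersection of the quotients through common representatives. It therefore suffices to prove, inside $\mathcal{C}^{\infty }\left( \Omega \right) ^{I}$, the identity $\mathcal{E}_{\mathcal{S}}^{\widetilde{\mathcal{R}}}\left( \Omega \right) =\mathcal{E}_{\mathcal{S}_{\ast }}^{\mathcal{R}_{0}}\left( \Omega \right) \cap \mathcal{E}_{\mathcal{S}^{\ast }}^{\mathcal{R}^{0}}\left( \Omega \right) $ modulo the common null ideal.

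The inclusion $\subseteq $ is immediate. Given $\left( u_{\epsilon }\right) _{\epsilon }\in \mathcal{E}_{\mathcal{S}}^{\widetilde{\mathcal{R}}}\left( \Omega \right) $ with witness $N\in \widetilde{\mathcal{R}}$, specialising the defining estimate to $\beta =0$ yields $\sup_{x}\left\vert \partial ^{\alpha }u_{\epsilon }\right\vert =O\left( \epsilon ^{-N_{\left\vert \alpha \right\vert ,0}}\right) $, and $N_{\cdot ,0}\in \mathcal{R}^{0}$ by definition, so $\left( u_{\epsilon }\right) _{\epsilon }\in \mathcal{E}_{\mathcal{S}^{\ast }}^{\mathcal{R}^{0}}\left( \Omega \right) $; specialising to $\alpha =0$ and using $N_{0,\cdot }\in \mathcal{R}_{0}$ gives $\left( u_{\epsilon }\right) _{\epsilon }\in \mathcal{E}_{\mathcal{S}_{\ast }}^{\mathcal{R}_{0}}\left( \Omega \right) $.

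For the reverse inclusion, take $\left( u_{\epsilon }\right) _{\epsilon }$ in the intersection. Since each $u_{\epsilon }\in \mathcal{S}^{\ast }\left( \Omega \right) \cap \mathcal{S}_{\ast }\left( \Omega \right) =\mathcal{S}\left( \Omega \right) $ for a box (this is where the box hypothesis enters), the net already lies in $\mathcal{S}\left( \Omega \right) ^{I}$. Because $\mathcal{R}^{0}=\left\{ N_{\cdot ,0}:N\in \widetilde{\mathcal{R}}\right\} $ and $\mathcal{R}_{0}=\left\{ N_{0,\cdot }:N\in \widetilde{\mathcal{R}}\right\} $, I may lift the two witnessing exponent sequences back to elements of $\widetilde{\mathcal{R}}$ and, applying $(\widetilde{R}2)$, merge them into a single $N\in \widetilde{\mathcal{R}}$ with $\sup_{x}\left\vert \partial ^{\alpha }u_{\epsilon }\right\vert =O\left( \epsilon ^{-N_{\left\vert \alpha \right\vert ,0}}\right) $ for all $\alpha $ and $\sup_{x}\left\vert x^{\beta }u_{\epsilon }\right\vert =O\left( \epsilon ^{-N_{0,\left\vert \beta \right\vert }}\right) $ for all $\beta $. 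The task is then to produce $N^{\prime \prime }\in \widetilde{\mathcal{R}}$ controlling the full mixed family $\sup_{x}\left\vert x^{\beta }\partial ^{\alpha }u_{\epsilon }\right\vert $. I would obtain this by a Landau--Kolmogorov/Taylor interpolation of the kind already used in Proposition \ref{propnull1}: split $\Omega $ into the region $\left\vert x\right\vert \leq \epsilon ^{-\sigma }$, where the weight is crudely bounded by $\epsilon ^{-\sigma \left\vert \beta \right\vert }$ and the pure derivative estimate applies directly, and the region $\left\vert x\right\vert >\epsilon ^{-\sigma }$, where one trades derivative order against the polynomial decay furnished by the weight estimate, using a Taylor step $h=\epsilon ^{\sigma ^{\prime }}$ to convert decay of $u_{\epsilon }$ into decay of $\partial ^{\alpha }u_{\epsilon }$. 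Optimising $\sigma ,\sigma ^{\prime }$ balances the two contributions and yields $\sup_{x}\left\vert x^{\beta }\partial ^{\alpha }u_{\epsilon }\right\vert =O\left( \epsilon ^{-c_{\left\vert \alpha \right\vert ,\left\vert \beta \right\vert }}\right) $, where the exponent $c_{\left\vert \alpha \right\vert ,\left\vert \beta \right\vert }$ is a combination of values of $N_{\cdot ,0}$ and $N_{0,\cdot }$ together with additive constants.

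The main obstacle is the final bookkeeping: assembling the exponents $c_{\left\vert \alpha \right\vert ,\left\vert \beta \right\vert }$ into a genuine element of $\widetilde{\mathcal{R}}$. Here the axioms of a regular set are exactly what is needed --- $(\widetilde{R}3)$ dominates the sum of a ``derivative'' exponent and a ``weight'' exponent, namely $N_{\left\vert \alpha \right\vert ,0}+N_{0,\left\vert \beta \right\vert }$, by a single sequence evaluated at the summed index $\left( \left\vert \alpha \right\vert ,\left\vert \beta \right\vert \right) $; $(\widetilde{R}1)$ absorbs the index shifts and additive constants produced by the interpolation; and $(\widetilde{R}2)$ merges the finitely many sequences arising along the way. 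The delicate point, which I expect to require the most care, is arranging the far-region interpolation so that \emph{only bounded} shifts of the indices feeding $N_{\cdot ,0}$ and $N_{0,\cdot }$ occur, since only such shifts are within reach of $(\widetilde{R}1)$; a careless choice of interpolation parameters makes these indices grow unboundedly and the construction of $N^{\prime \prime }$ fails. Once $N^{\prime \prime }$ is secured, $\left( u_{\epsilon }\right) _{\epsilon }\in \mathcal{E}_{\mathcal{S}}^{\widetilde{\mathcal{R}}}\left( \Omega \right) $, and combined with the coincidence of the null ideals recorded in the first paragraph this yields the stated equality of algebras.
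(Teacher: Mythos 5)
Your outline coincides with the paper's proof of Theorem \ref{th10}: reduce the equality of quotients to an identity of moderate spaces plus an identity of null ideals, get the easy inclusions by specialising $\beta =0$ or $\alpha =0$, and attack the hard inclusion by a Taylor-type interpolation whose exponents are then organised via $(\widetilde{R}1)$--$(\widetilde{R}3)$. The genuine gap is that your proposal never executes the one step that constitutes the actual proof, namely the construction of $N^{\prime \prime }$, and the route you sketch cannot deliver it. In the far region the Taylor remainder is $\tfrac{h}{2}\vert \partial _{1}^{2}u_{\epsilon }\vert $, for which only the \emph{unweighted} bound $O\left( \epsilon ^{-M_{2}}\right) $ is available; once you multiply by the weight $\left( 1+\vert x\vert \right) ^{\vert \beta \vert }$, no $x$-independent step $h=\epsilon ^{\sigma ^{\prime }}$ controls it, so you are forced to take $h\sim \left( 1+\vert x\vert \right) ^{-\vert \beta \vert }$; but then the difference-quotient terms cost $\left( 1+\vert x\vert \right) ^{\vert \beta \vert }/h=\left( 1+\vert x\vert \right) ^{2\vert \beta \vert }$, i.e.\ they call on the weight-$2\vert \beta \vert $ estimate of $u_{\epsilon }$. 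The exponent you actually obtain is of the form $L_{2\vert \beta \vert }+M_{2}$ (and with iterated doubling, $L_{2^{\vert \alpha \vert }\vert \beta \vert }+\cdots $, after the induction on $\vert \alpha \vert $): the indices feeding $N_{0,\cdot }$ are \emph{dilated}, not shifted by constants, and $(\widetilde{R}1)$--$(\widetilde{R}3)$ give no way to dominate $l\mapsto L_{2l}$ by an element of $\widetilde{\mathcal{R}}$ for a general regular set (it works for the paper's examples --- all sequences, bounded sequences, affine sequences --- precisely because those are stable under index dilation). So the failure mode you flag in your last paragraph is not a bookkeeping nuisance you can expect to tame; it is exactly what your interpolation outputs, and the proposal contains no device for avoiding it. For comparison, the paper takes $h$ independent of $x$ and asserts $\sup_{x_{1}>0}\left( 1+\vert x\vert \right) ^{k}\vert \partial _{1}u_{\epsilon }\vert =O\left( \epsilon ^{-L_{k}-M_{2}}\right) $, which silently discards the weight on the remainder term; your caution is better founded than the paper's confidence, but neither text closes this step, and any complete proof must.

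A second, smaller gap is the null-ideal paragraph. Proposition \ref{prop8}$\left( iii\right) $ states $\mathcal{N}_{\mathcal{S}}^{\widetilde{\mathcal{R}}}\left( \Omega \right) =\mathcal{N}_{\mathcal{S}}\left( \Omega \right) $; it does \emph{not} say that a net of $\mathcal{E}_{\mathcal{S}}^{\widetilde{\mathcal{R}}}\left( \Omega \right) $ with $\sup_{x}\vert u_{\epsilon }\vert =O\left( \epsilon ^{m}\right) $ for all $m$ lies in $\mathcal{N}_{\mathcal{S}}\left( \Omega \right) $. That $L^{\infty }$ characterization is the corollary which the paper \emph{deduces from} Theorem \ref{th10}, so invoking it here is circular. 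What your reduction really needs is the inclusion $\mathcal{N}_{\mathcal{S}_{\ast }}\left( \Omega \right) \cap \mathcal{N}_{\mathcal{S}^{\ast }}\left( \Omega \right) \cap \mathcal{E}_{\mathcal{S}_{\ast }}^{\mathcal{R}_{0}}\left( \Omega \right) \cap \mathcal{E}_{\mathcal{S}^{\ast }}^{\mathcal{R}^{0}}\left( \Omega \right) \subset \mathcal{N}_{\mathcal{S}}\left( \Omega \right) $, i.e.\ smallness of the mixed quantities $\sup \vert x^{\beta }\partial ^{\alpha }u_{\epsilon }\vert $, which the paper obtains by rerunning the Taylor argument with null estimates. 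This part is genuinely fixable --- under ``for all $m$'' hypotheses the index doubling discussed above is harmless, since the weighted bounds hold for every weight and every order --- but as written your first paragraph assumes what is to be proved; Propositions \ref{propnull1} and \ref{prop.null} alone only give you the pure-derivative and pure-weight decay, never the mixed one.
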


\begin{proof}
We have to show that $\mathcal{E}_{\mathcal{S}}^{\widetilde{\mathcal{R}}%
}\left( \Omega \right) =\mathcal{E}_{\mathcal{S}_{\ast }}^{\mathcal{R}%
_{0}}\left( \Omega \right) \cap \mathcal{E}_{\mathcal{S}^{\ast }}^{\mathcal{R%
}^{0}}\left( \Omega \right) $ and $\mathcal{N}_{\mathcal{S}}\left( \Omega
\right) =\mathcal{N}_{\mathcal{S}_{\ast }}\left( \Omega \right) \cap
\mathcal{N}_{\mathcal{S}^{\ast }}\left( \Omega \right) .$ The inclusions $%
\mathcal{E}_{\mathcal{S}}^{\widetilde{\mathcal{R}}}\left( \Omega \right)
\subset \mathcal{E}_{\mathcal{S}_{\ast }}^{\mathcal{R}_{0}}\left( \Omega
\right) \cap \mathcal{E}_{\mathcal{S}^{\ast }}^{\mathcal{R}^{0}}\left(
\Omega \right) $ and $\mathcal{N}_{\mathcal{S}}\left( \Omega \right) \subset
\mathcal{N}_{\mathcal{S}_{\ast }}\left( \Omega \right) \cap \mathcal{N}_{%
\mathcal{S}^{\ast }}\left( \Omega \right) $ are obvious$.$ In order to show
the reverse inclusions, first let $\left( u_{\epsilon }\right) _{\epsilon
}\in $ $\mathcal{E}_{\mathcal{S}_{\ast }}^{\mathcal{R}_{0}}\left( \Omega
\right) \cap \mathcal{E}_{\mathcal{S}^{\ast }}^{\mathcal{R}^{0}}\left(
\Omega \right) $, then $\left( u_{\epsilon }\right) _{\epsilon }\in \mathcal{%
S}^{\ast }\left( \Omega \right) ^{I}\cap \mathcal{S}_{\ast }\left( \Omega
\right) ^{I}=\mathcal{S}\left( \Omega \right) ^{I}.$ In order to show that $%
\left( u_{\epsilon }\right) _{\epsilon }$ satisfies the estimate of $%
\mathcal{E}_{\mathcal{S}}^{\widetilde{\mathcal{R}}}\left( \Omega \right) $,
set $x=\left( x_{1},x^{\prime }\right) \in \mathbf{I}_{1}\mathbf{\times }%
\left( \mathbf{I}_{2}\times \mathbf{I}_{3}\times ...\times \mathbf{I}%
_{n}\right) :=\Omega $ and consider in first the case $x_{1}>0.$ For $h>0$
sufficiently small$,$ a Taylor's expansion of $u_{\epsilon }$ with respect
to $x_{1}$ gives
\begin{equation}
u_{\epsilon }\left( x_{1}+h,x^{\prime }\right) =u_{\epsilon }\left(
x_{1},x^{\prime }\right) +h\partial _{1}u_{\epsilon }\left( x_{1},x^{\prime
}\right) +\frac{h^{2}}{2}\partial _{1}^{2}u_{\epsilon }\left( \xi ,x^{\prime
}\right) ,  \label{22}
\end{equation}%
for $\xi \in \left] x_{1},x_{1}+h\right[ .$ The hypothesis $\left(
u_{\epsilon }\right) _{\epsilon }\in $ $\mathcal{E}_{\mathcal{S}_{\ast }}^{%
\mathcal{R}_{0}}\left( \Omega \right) \cap \mathcal{E}_{\mathcal{S}^{\ast
}}^{\mathcal{R}^{0}}\left( \Omega \right) $ gives
\begin{equation*}
\exists L\in \mathcal{R}_{0},\forall k\in \mathbb{Z}_{+},\underset{x_{1}>0}{%
\sup }\left( 1+\left\vert x\right\vert \right) ^{k}\left\vert u_{\epsilon
}\left( x\right) \right\vert =O\left( \epsilon ^{-L_{k}}\right) ,\epsilon
\longrightarrow 0
\end{equation*}%
\begin{equation*}
\exists M\in \mathcal{R}^{0},\underset{x_{1}>0}{\sup }\left\vert \partial
_{1}^{2}u_{\epsilon }\left( x\right) \right\vert =O\left( \epsilon
^{-M_{2}}\right) ,\epsilon \longrightarrow 0
\end{equation*}%
We have%
\begin{equation*}
\underset{x_{1}>0}{\sup }\left( 1+\left\vert x\right\vert \right)
^{k}\left\vert u_{\epsilon }\left( x_{1}+h,x^{\prime }\right) \right\vert
\leq \underset{x_{1}>0}{\sup }\left( 1+\left\vert \left( x_{1}+h,x^{\prime
}\right) \right\vert \right) ^{k}\left\vert u_{\epsilon }\left(
x_{1}+h,x^{\prime }\right) \right\vert =O\left( \epsilon ^{-L_{k}}\right)
,\epsilon \longrightarrow 0
\end{equation*}%
It follows from $\left( \ref{22}\right) $
\begin{equation*}
\left\vert \partial _{1}u_{\epsilon }\left( x_{1},x^{\prime }\right)
\right\vert \leq \frac{1}{h}\left[ \left\vert u_{\epsilon }\left(
x_{1}+h,x^{\prime }\right) \right\vert +\left\vert u_{\epsilon }\left(
x_{1},x^{\prime }\right) \right\vert \right] +\frac{h}{2}\left\vert \partial
_{1}^{2}u_{\epsilon }\left( \xi ,x^{\prime }\right) \right\vert
\end{equation*}%
Therefore
\begin{equation*}
\underset{x_{1}>0}{\sup }\left( 1+\left\vert x\right\vert \right)
^{k}\left\vert \partial _{1}u_{\epsilon }\left( x\right) \right\vert
=O\left( \epsilon ^{-L_{k}-M_{2}}\right) ,\epsilon \longrightarrow 0
\end{equation*}%
From $\left( \widetilde{R}3\right) $ of definition \ref{Def2} , there exists
$N^{\prime }\in \widetilde{\mathcal{R}}$ such that
\begin{equation*}
L_{k}+M_{2}\leq N_{2,k}^{\prime }\text{ ,}
\end{equation*}%
consequently
\begin{equation*}
\forall \beta \in \mathbb{Z}_{+}^{n},\underset{x_{1}>0}{\sup }\left\vert
x^{\beta }\partial _{1}u_{\epsilon }\left( x\right) \right\vert \leq C%
\underset{x_{1}>0}{\sup }\left( 1+\left\vert x\right\vert \right)
^{\left\vert \beta \right\vert }\left\vert \partial _{1}u_{\epsilon }\left(
x\right) \right\vert =O\left( \epsilon ^{-N_{2,\left\vert \beta \right\vert
}^{\prime }}\right) ,\epsilon \longrightarrow 0
\end{equation*}%
If $x_{1}<0,$ consider $v_{\epsilon }$ such that $v_{\epsilon }\left(
x\right) =u_{\epsilon }\left( -x_{1},x^{\prime }\right) .$ We see that $%
\left( v_{\epsilon }\right) _{\epsilon }\in \mathcal{E}_{\mathcal{S}_{\ast
}}^{\mathcal{R}_{0}}\left( \Omega \right) \cap \mathcal{E}_{\mathcal{S}%
^{\ast }}^{\mathcal{R}^{0}}\left( \Omega \right) $ and consequently the
above arguments give the existence of $N"\in \widetilde{\mathcal{R}}$ such
that
\begin{equation*}
\underset{x_{1}>0}{\sup }\left\vert x^{\beta }\partial _{1}v_{\epsilon
}\left( x\right) \right\vert =\underset{x_{1}<0}{\sup }\left\vert x^{\beta
}\partial _{1}u_{\epsilon }\left( x\right) \right\vert =O\left( \epsilon
^{-N"_{2,\left\vert \beta \right\vert }}\right) ,\epsilon \longrightarrow 0
\end{equation*}%
Now from $\left( \widetilde{R}1\right) $ and $\left( \widetilde{R}2\right) $
of definition \ref{Def2}$,$ there exists $N$ $\in \widetilde{\mathcal{R}}$
such that
\begin{equation*}
\max \left( N_{2,\left\vert \beta \right\vert }^{\prime },N"_{2,\left\vert
\beta \right\vert }\right) \leq N_{1,\left\vert \beta \right\vert }\text{ ,}
\end{equation*}%
so
\begin{equation*}
\underset{x\in \Omega }{\sup }\left\vert x^{\beta }\partial _{1}u_{\epsilon
}\left( x\right) \right\vert =O\left( \epsilon ^{-N_{1,\left\vert \beta
\right\vert }}\right) ,\epsilon \longrightarrow 0
\end{equation*}%
Analogously we show
\begin{equation*}
\exists N\in \widetilde{\mathcal{R}};\forall \beta \in \mathbb{Z}_{+}^{n},%
\underset{x\in \Omega }{\sup }\left\vert x^{\beta }\partial _{i}u_{\epsilon
}\left( x\right) \right\vert =O\left( \epsilon ^{-N_{1,\left\vert \beta
\right\vert }}\right) ,i=2,...,n
\end{equation*}%
Therefore, by induction, we obtain
\begin{equation*}
\exists N\in \widetilde{\mathcal{R}};\forall \alpha \in \mathbb{Z}%
_{+}^{n},\forall \beta \in \mathbb{Z}_{+}^{n},\underset{x\in \Omega }{\sup }%
\left\vert x^{\beta }\partial ^{\alpha }u_{\epsilon }\left( x\right)
\right\vert =O\left( \epsilon ^{-N_{\left\vert \alpha \right\vert
,\left\vert \beta \right\vert }}\right) ,\epsilon \longrightarrow 0,
\end{equation*}%
i.e. $\left( u_{\epsilon }\right) _{\epsilon }\in \mathcal{E}_{\mathcal{S}}^{%
\widetilde{\mathcal{R}}}\left( \Omega \right) .$

Suppose now that $\left( u_{\epsilon }\right) _{\epsilon }\in \mathcal{N}_{%
\mathcal{S}_{\ast }}\left( \Omega \right) \cap \mathcal{N}_{\mathcal{S}%
^{\ast }}\left( \Omega \right) ,$ then
\begin{equation*}
\forall m\in \mathbb{Z}_{+},\forall k\in \mathbb{Z}_{+},\underset{x_{1}>0}{%
\sup }\left( 1+\left\vert x\right\vert \right) ^{k}\left\vert u_{\epsilon
}\left( x\right) \right\vert =O\left( \epsilon ^{\frac{m}{2}}\right)
,\epsilon \longrightarrow 0
\end{equation*}%
\begin{equation*}
\forall m\in \mathbb{Z}_{+};\underset{x_{1}>0}{\sup }\left\vert \partial
_{1}^{2}u_{\epsilon }\left( x\right) \right\vert =O\left( \epsilon ^{\frac{m%
}{2}}\right) ,\epsilon \longrightarrow 0
\end{equation*}%
We have%
\begin{equation*}
\underset{x_{1}>0}{\sup }\left( 1+\left\vert x\right\vert \right)
^{k}\left\vert u_{\epsilon }\left( x_{1}+h,x^{\prime }\right) \right\vert
\leq \underset{x_{1}>0}{\sup }\left( 1+\left\vert \left( x_{1}+h,x^{\prime
}\right) \right\vert \right) ^{k}\left\vert u_{\epsilon }\left(
x_{1}+h,x^{\prime }\right) \right\vert =O\left( \epsilon ^{\frac{m}{2}%
}\right) ,\epsilon \longrightarrow 0
\end{equation*}%
It follows from $\left( \ref{22}\right) $%
\begin{equation*}
\underset{x_{1}>0}{\sup }\left( 1+\left\vert x\right\vert \right)
^{k}\left\vert \partial _{1}u_{\epsilon }\left( x\right) \right\vert
=O\left( \epsilon ^{m}\right) ,\epsilon \longrightarrow 0
\end{equation*}%
Consequently
\begin{equation*}
\forall m\in \mathbb{Z}_{+},\forall \beta \in \mathbb{Z}_{+}^{n},\underset{%
x_{1}>0}{\sup }\left\vert x^{\beta }\partial _{1}u_{\epsilon }\left(
x\right) \right\vert ^{2}\leq C_{1}\underset{x_{1}>0}{\sup }\left(
1+\left\vert x\right\vert \right) ^{\left\vert \beta \right\vert }\left\vert
\partial _{1}u_{\epsilon }\left( x\right) \right\vert =O\left( \epsilon
^{m}\right) ,\epsilon \longrightarrow 0
\end{equation*}%
If $x_{1}<0,$ consider $v_{\epsilon }$ such that $v_{\epsilon }\left(
x\right) =u_{\epsilon }\left( -x_{1},x^{\prime }\right) $ as above, then we
obtain
\begin{equation*}
\underset{x_{1}>0}{\sup }\left\vert x^{\beta }\partial _{1}v_{\epsilon
}\left( x\right) \right\vert ^{2}=\underset{x_{1}<0}{\sup }\left\vert
x^{\beta }\partial _{1}u_{\epsilon }\left( x\right) \right\vert ^{2}=O\left(
\epsilon ^{m}\right) ,\epsilon \longrightarrow 0
\end{equation*}%
Therefore, by induction, we have
\begin{equation*}
\forall m\in \mathbb{Z}_{+},\forall \alpha \in \mathbb{Z}_{+}^{n},\forall
\beta \in \mathbb{Z}_{+}^{n},\underset{x\in \Omega }{\sup }\left\vert
x^{\beta }\partial ^{\alpha }u_{\epsilon }\left( x\right) \right\vert
=O\left( \epsilon ^{m}\right) ,\epsilon \longrightarrow 0
\end{equation*}%
Thus $\mathcal{N}_{\mathcal{S}_{\ast }}\left( \Omega \right) \cap \mathcal{N}%
_{\mathcal{S}^{\ast }}\left( \Omega \right) \subset \mathcal{N}_{\mathcal{S}%
}\left( \Omega \right) $ and consequently $\mathcal{G}_{\mathcal{S}}^{%
\widetilde{\mathcal{R}}}\left( \Omega \right) =\mathcal{G}_{\mathcal{S}%
_{\ast }}^{\mathcal{R}_{0}}\left( \Omega \right) \cap \mathcal{G}_{\mathcal{S%
}^{\ast }}^{\mathcal{R}^{0}}\left( \Omega \right) .$
\end{proof}

The propositions \ref{propnull1} and \ref{prop.null} give the following
result which characterizes the negligible elements of the algebra $\mathcal{G%
}_{\mathcal{S}}^{\widetilde{\mathcal{R}}}\left( \Omega \right) .$

\begin{corollary}
If $\Omega $ is a box, then an element $\left( u_{\epsilon }\right)
_{\epsilon }$ $\in \mathcal{E}_{\mathcal{S}}^{\widetilde{\mathcal{R}}}\left(
\Omega \right) $ is in $\mathcal{N}_{\mathcal{S}}\left( \Omega \right) $ if
and only if the following condition is satisfied
\begin{equation*}
\forall m\in \mathbb{Z}_{+},\underset{x\in \Omega }{\sup }\left\vert
u_{\epsilon }\left( x\right) \right\vert =O\left( \epsilon ^{m}\right)
,\epsilon \longrightarrow 0
\end{equation*}
\end{corollary}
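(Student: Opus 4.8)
The plan is to deduce the statement directly from the two null characterizations already established, Proposition \ref{propnull1} and Proposition \ref{prop.null}, together with the structural identities obtained while proving Theorem \ref{th10}. The necessity (``only if'') is immediate: if $(u_\epsilon)_\epsilon \in \mathcal{N}_{\mathcal{S}}(\Omega)$, then taking $\alpha=\beta=0$ in the defining estimate of $\mathcal{N}_{\mathcal{S}}(\Omega)$ gives exactly $\sup_{x\in\Omega}|u_\epsilon(x)| = O(\epsilon^m)$ for every $m\in\mathbb{Z}_+$.

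For the sufficiency (``if''), I would first record the two easy inclusions noted at the start of the proof of Theorem \ref{th10}. Since $(u_\epsilon)_\epsilon \in \mathcal{E}_{\mathcal{S}}^{\widetilde{\mathcal{R}}}(\Omega)$, there is $N\in\widetilde{\mathcal{R}}$ controlling all the seminorms $\sup_{x\in\Omega}|x^\beta\partial^\alpha u_\epsilon(x)|$; specializing to $\beta=0$ and invoking that $\mathcal{R}^0=\{N_{\cdot,0}:N\in\widetilde{\mathcal{R}}\}$ is regular (part $(i)$ of the Lemma) shows $(u_\epsilon)_\epsilon \in \mathcal{E}_{\mathcal{S}^\ast}^{\mathcal{R}^0}(\Omega)$, while specializing to $\alpha=0$ and using the regularity of $\mathcal{R}_0$ (part $(ii)$) shows $(u_\epsilon)_\epsilon \in \mathcal{E}_{\mathcal{S}_\ast}^{\mathcal{R}_0}(\Omega)$. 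Thus $(u_\epsilon)_\epsilon$ lies in the domain of both propositions.

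Now the hypothesis $\sup_{x\in\Omega}|u_\epsilon(x)|=O(\epsilon^m)$ for all $m$ is verbatim condition (\ref{5-1}) of Proposition \ref{propnull1} and condition (\ref{Ns Garetto}) of Proposition \ref{prop.null}. Applying Proposition \ref{propnull1} (which requires $\Omega$ to be a box) yields $(u_\epsilon)_\epsilon \in \mathcal{N}_{\mathcal{S}^\ast}(\Omega)$, and applying Proposition \ref{prop.null} yields $(u_\epsilon)_\epsilon \in \mathcal{N}_{\mathcal{S}_\ast}(\Omega)$. Intersecting and using the identity $\mathcal{N}_{\mathcal{S}}(\Omega)=\mathcal{N}_{\mathcal{S}_\ast}(\Omega)\cap\mathcal{N}_{\mathcal{S}^\ast}(\Omega)$ proved in Theorem \ref{th10} gives $(u_\epsilon)_\epsilon \in \mathcal{N}_{\mathcal{S}}(\Omega)$, as desired.

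I expect no real obstacle here, since the entire analytic content has been pushed into the earlier propositions: the genuine work sits in Proposition \ref{propnull1}, whose Taylor-expansion argument needs the box hypothesis so that $x+\epsilon^{N_2+m}e_i$ stays in $\Omega$, and in the elementary squaring trick $|x^\beta u_\epsilon|^2=|x^{2\beta}u_\epsilon|\,|u_\epsilon|$ of Proposition \ref{prop.null}. The only points to keep in mind are that the auxiliary index sets $\mathcal{R}^0$ and $\mathcal{R}_0$ must be regular (guaranteed by the Lemma) in order for the two propositions to apply, and that the box hypothesis is used twice over, both directly through Proposition \ref{propnull1} and through the $\mathcal{N}$-decomposition borrowed from Theorem \ref{th10}.
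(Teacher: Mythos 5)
Your proof is correct and follows exactly the route the paper intends: the corollary is stated as a consequence of Propositions \ref{propnull1} and \ref{prop.null}, combined with the inclusions $\mathcal{E}_{\mathcal{S}}^{\widetilde{\mathcal{R}}}\left( \Omega \right) \subset \mathcal{E}_{\mathcal{S}^{\ast }}^{\mathcal{R}^{0}}\left( \Omega \right) \cap \mathcal{E}_{\mathcal{S}_{\ast }}^{\mathcal{R}_{0}}\left( \Omega \right)$ and the identity $\mathcal{N}_{\mathcal{S}}\left( \Omega \right) =\mathcal{N}_{\mathcal{S}_{\ast }}\left( \Omega \right) \cap \mathcal{N}_{\mathcal{S}^{\ast }}\left( \Omega \right)$ established in the proof of Theorem \ref{th10}. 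Your additional remarks on where regularity of $\mathcal{R}^{0}$, $\mathcal{R}_{0}$ and the box hypothesis enter are accurate and complete the argument the paper leaves implicit.
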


\section{Characterization of $\mathcal{R-}$regular rapidly decreasing
generalized functions via Fourier transform}

The direct Fourier transform of $u\in \mathcal{S}$, denoted $\widehat{u},$
is defined by
\begin{equation*}
\widehat{u}\left( \xi \right) =\left( 2\pi \right)
^{-\frac{n}{2}}\int e^{-ix\xi }u\left( x\right) dx
\end{equation*}

\begin{definition}
The Fourier transform of $u=\left[ \left( u_{\epsilon }\right) _{\epsilon }%
\right] \in \mathcal{G}_{\mathcal{S}}^{\widetilde{\mathcal{R}}}$, denoted by
$\mathcal{F}_{\mathcal{S}}\left( u\right) $, is defined by
\begin{equation*}
\mathcal{F}_{\mathcal{S}}\left( u\right) =\widehat{u}=\left[ \left( \widehat{%
u_{\epsilon }}\right) _{\epsilon }\right] \text{ in }\mathcal{G}_{\mathcal{S}%
}^{\widetilde{\mathcal{R}}}
\end{equation*}
\end{definition}

\begin{remark}
The inverse Fourier transform of $u\in \mathcal{S}$, denoted $\widetilde{u},$
and the map $\mathcal{F}_{\mathcal{S}}^{-1}$ are defined as usually and in
the same way.
\end{remark}

The following proposition gives one of the main results of the Fourier
transform $\mathcal{F}_{\mathcal{S}}$ and is easy to prove.

\begin{proposition}
The map
\begin{equation*}
\mathcal{F}_{\mathcal{S}}:\mathcal{G}_{\mathcal{S}}^{\widetilde{\mathcal{R}}%
}\longrightarrow \mathcal{G}_{\mathcal{S}}^{\widetilde{\mathcal{R}}}
\end{equation*}
is an algebraic isomorphism.
\end{proposition}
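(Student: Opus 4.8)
The plan is to prove the statement at the level of representatives, establishing in turn that $\mathcal{F}_{\mathcal{S}}$ is well defined, linear, and bijective with inverse $\mathcal{F}_{\mathcal{S}}^{-1}$. First I would record the two classical commutation identities on $\mathcal{S}$, namely $\widehat{\partial^{\gamma}f}=(i\xi)^{\gamma}\widehat{f}$ and $\partial_{\xi}^{\gamma}\widehat{f}=(-i)^{|\gamma|}\widehat{x^{\gamma}f}$, which combine to
\[ \xi^{\beta}\partial_{\xi}^{\alpha}\widehat{f}(\xi)=c_{\alpha,\beta}\,\widehat{\partial^{\beta}(x^{\alpha}f)}(\xi) \]
for a unimodular constant $c_{\alpha,\beta}$. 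Coupling this with the elementary bounds $\|\widehat{g}\|_{\infty}\le(2\pi)^{-n/2}\|g\|_{L^{1}}$ and $\|g\|_{L^{1}}\le C\sup_{x}(1+|x|)^{n+1}|g(x)|$, and expanding $\partial^{\beta}(x^{\alpha}f)$ by Leibniz, I obtain the master estimate
\[ \sup_{\xi}\bigl|\xi^{\beta}\partial_{\xi}^{\alpha}\widehat{f}(\xi)\bigr|\le C\sum\sup_{x}\bigl|x^{\alpha''}\partial^{\beta'}f(x)\bigr|, \]
where the finite sum runs over $|\beta'|\le|\beta|$ and $|\alpha''|\le|\alpha|+n+1$. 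This single inequality drives everything.

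Next I would feed the $\mathcal{E}_{\mathcal{S}}^{\widetilde{\mathcal{R}}}$ and $\mathcal{N}_{\mathcal{S}}$ bounds into it. For negligibility the argument is immediate: if $(u_{\epsilon})_{\epsilon}\in\mathcal{N}_{\mathcal{S}}(\mathbb{R}^{n})$ then each summand is $O(\epsilon^{m})$ for every fixed $m$, so the finite sum is too, giving $(\widehat{u_{\epsilon}})_{\epsilon}\in\mathcal{N}_{\mathcal{S}}(\mathbb{R}^{n})$ and hence independence of $\mathcal{F}_{\mathcal{S}}(u)$ from the chosen representative. For moderateness I start from a single $N\in\widetilde{\mathcal{R}}$ controlling all derivatives of $u_{\epsilon}$, so each summand is $O(\epsilon^{-N_{|\beta'|,|\alpha''|}})$ and I must produce one $N'\in\widetilde{\mathcal{R}}$ with
\[ N'_{|\alpha|,|\beta|}\ \ge\ N_{|\beta'|,|\alpha''|}\qquad\text{for all }|\beta'|\le|\beta|,\ |\alpha''|\le|\alpha|+n+1. \]
Here axiom $(\widetilde{R}3)$, applied with $N$ in both slots, collapses the entire finite spread to a single corner value, yielding $M\in\widetilde{\mathcal{R}}$ with $M_{|\beta|,|\alpha|+n+1}\ge N_{|\beta'|,|\alpha''|}$ over the whole range.

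The step I expect to be the real obstacle is the index \emph{transposition} built into the master identity: the corner value $M$ is read at $(|\beta|,|\alpha|+n+1)$, whereas the target estimate wants $N'$ at $(|\alpha|,|\beta|)$ --- the Fourier transform has interchanged the derivative index and the polynomial index. To close the argument one needs $\widetilde{\mathcal{R}}$ to be stable under the transposition $N\mapsto\widetilde{N}$, $\widetilde{N}_{q,l}:=N_{l,q}$; granting that, one transposes $M$ and applies $(\widetilde{R}1)$ with a shift of $n+1$ in the first slot to pass from $\widetilde{M}_{|\alpha|+n+1,|\beta|}=M_{|\beta|,|\alpha|+n+1}$ down to $N'_{|\alpha|,|\beta|}$, with $N'\in\widetilde{\mathcal{R}}$, as required. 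This transposition invariance is the crux; it holds trivially for $\widetilde{\mathcal{R}}=\mathbb{R}_{+}^{\mathbb{Z}_{+}^{2}}$ and for $\widetilde{\mathcal{R}}=\widetilde{\mathcal{B}}$, the two regular sets that specialize the theorem to $\mathcal{G}_{\mathcal{S}}$ and $\mathcal{G}_{\mathcal{S}}^{\infty}$.

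Finally, the inverse transform obeys the identical commutation relations, so the same three steps show $\mathcal{F}_{\mathcal{S}}^{-1}$ maps $\mathcal{E}_{\mathcal{S}}^{\widetilde{\mathcal{R}}}$ into itself and $\mathcal{N}_{\mathcal{S}}$ into itself, hence is well defined on $\mathcal{G}_{\mathcal{S}}^{\widetilde{\mathcal{R}}}$. Linearity is immediate, and the classical inversion $\mathcal{F}^{-1}\mathcal{F}f=f=\mathcal{F}\mathcal{F}^{-1}f$ holds exactly on each representative, so $\mathcal{F}_{\mathcal{S}}^{-1}$ is a genuine two-sided inverse and $\mathcal{F}_{\mathcal{S}}$ is the asserted isomorphism. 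I would read ``algebraic'' here as $\widetilde{\mathbb{C}}$-linear: since $\widehat{fg}=\widehat{f}\ast\widehat{g}$, the map is not multiplicative for the pointwise product, and the substantive content is precisely its bijectivity as a linear (module) map.
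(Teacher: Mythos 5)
Your argument is correct, and it supplies something the paper does not actually contain: the paper's ``proof'' of this proposition is the single remark that it ``is easy to prove,'' so there is no argument of record to compare against. The route you take --- the commutation identity $\xi^{\beta}\partial^{\alpha}\widehat{f}=c_{\alpha,\beta}\,\widehat{\partial^{\beta}(x^{\alpha}f)}$, the $\mathbb{L}^{1}\to\mathbb{L}^{\infty}$ bound for $\mathcal{F}$, Leibniz, then taming the spread of indices by the $(\widetilde{R}3)$ corner trick and the $(\widetilde{R}1)$ shift --- is the natural one, and it is exactly the mechanism the paper does write out later in the proof of Theorem \ref{prop2}; your treatment of negligibility (which needs no regularity axioms at all, since $\mathcal{N}_{\mathcal{S}}^{\widetilde{\mathcal{R}}}=\mathcal{N}_{\mathcal{S}}$) and of the inverse transform is also fine, as is your reading of ``algebraic isomorphism'' as $\widetilde{\mathbb{C}}$-linear bijection: $\mathcal{F}$ turns pointwise products into convolutions, so multiplicativity cannot be meant.

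The point you flagged as the crux --- stability of $\widetilde{\mathcal{R}}$ under the transposition $N\mapsto\widetilde{N}$, $\widetilde{N}_{q,l}=N_{l,q}$ --- is not a technicality you failed to close; it is a genuine hypothesis missing from the paper, and without it the proposition as stated is false. None of $(\widetilde{R}1)$--$(\widetilde{R}3)$ implies it: take $n=1$ and $\widetilde{\mathcal{R}}=\left\{ N:\exists C>0,\ N_{q,l}\leq C\left( q+1\right) \ \forall \left( q,l\right) \right\}$, which satisfies all three axioms, hence is regular. For $u_{\epsilon }\left( x\right) =e^{-x^{2}/\epsilon ^{2}}$ one has $\sup_{x}\left\vert x^{\beta }\partial ^{\alpha }u_{\epsilon }\left( x\right) \right\vert =O\left( \epsilon ^{\left\vert \beta \right\vert -\left\vert \alpha \right\vert }\right) =O\left( \epsilon ^{-\left\vert \alpha \right\vert }\right)$, so $\left( u_{\epsilon }\right) _{\epsilon }\in \mathcal{E}_{\mathcal{S}}^{\widetilde{\mathcal{R}}}$; but $\widehat{u_{\epsilon }}\left( \xi \right) =\tfrac{\epsilon }{\sqrt{2}}e^{-\epsilon ^{2}\xi ^{2}/4}$ gives $\sup_{\xi }\left\vert \xi ^{\beta }\widehat{u_{\epsilon }}\left( \xi \right) \right\vert =c_{\beta }\epsilon ^{1-\left\vert \beta \right\vert }$, which for large $\left\vert \beta \right\vert$ cannot be $O\left( \epsilon ^{-M_{0,\left\vert \beta \right\vert }}\right)$ with $M_{0,\left\vert \beta \right\vert }\leq C$, and growth of unbounded order cannot be absorbed by a negligible perturbation; hence $\left[ \left( \widehat{u_{\epsilon }}\right) _{\epsilon }\right]$ does not lie in $\mathcal{G}_{\mathcal{S}}^{\widetilde{\mathcal{R}}}$ at all. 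So the proposition is true exactly under your added hypothesis (satisfied by $\mathbb{R}_{+}^{\mathbb{Z}_{+}^{2}}$ and $\widetilde{\mathcal{B}}$, the only two cases the paper uses in its corollaries), and the paper itself tacitly respects the obstruction elsewhere: in Theorem \ref{prop2} the Fourier side is measured in $\mathcal{G}_{\widehat{\mathcal{S}^{\ast }}}^{\mathcal{R}^{0}}$, i.e.\ against the transposed section $\mathcal{R}^{0}$, not against $\widetilde{\mathcal{R}}$ itself. Your proposal is thus not merely correct; it repairs the statement.
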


Let
\begin{equation*}
\widehat{\mathcal{S}^{\ast }}=\left\{ f\in \mathcal{C}^{\infty }:\forall
\beta \in \mathbb{Z}_{+}^{n},\underset{\xi \in \mathbb{R}^{n}}{\sup }%
\left\vert \xi ^{\beta }\widehat{f}\left( \xi \right) \right\vert <\infty
\right\} ,
\end{equation*}%
and let $\widetilde{\mathcal{R}}$ be a regular subset of $\mathbb{R}_{+}^{%
\mathbb{Z}_{+}^{2}},$ if we define
\begin{equation*}
\mathcal{E}_{\widehat{\mathcal{S}^{\ast }}}^{\mathcal{R}^{0}}=\left\{ \left(
u_{\epsilon }\right) _{\epsilon }\in \widehat{\mathcal{S}^{\ast }}%
^{I}:\exists N\in \mathcal{R}^{0},\forall \beta \in \mathbb{Z}_{+}^{n},%
\underset{\xi \in \mathbb{R}^{n}}{\sup }\left\vert \xi ^{\beta }\widehat{%
u_{\epsilon }}\left( \xi \right) \right\vert =O\left( \epsilon
^{-N_{\left\vert \beta \right\vert }}\right) ,\epsilon \longrightarrow
0\right\} ,
\end{equation*}%
\begin{equation*}
\mathcal{N}_{\widehat{\mathcal{S}^{\ast }}}^{\mathcal{R}^{0}}=\left\{ \left(
u_{\epsilon }\right) _{\epsilon }\in \widehat{\mathcal{S}^{\ast }}%
^{I}:\forall N\in \mathcal{R}^{0},\forall \beta \in \mathbb{Z}_{+}^{n},%
\underset{\xi \in \mathbb{R}^{n}}{\sup }\left\vert \xi ^{\beta }\widehat{%
u_{\epsilon }}\left( \xi \right) \right\vert =O\left( \epsilon
^{N_{\left\vert \beta \right\vert }}\right) ,\epsilon \longrightarrow
0\right\} ,
\end{equation*}%
then the following proposition is easy to prove.

\begin{proposition}
$\left( i\right) $ The space $\mathcal{E}_{\widehat{\mathcal{S}^{\ast }}}^{%
\mathcal{R}^{0}}$ is a subalgebra of $\widehat{\mathcal{S}^{\ast }}^{I}.$

$\left( ii\right) $ The space $\mathcal{N}_{\widehat{\mathcal{S}^{\ast }}}^{%
\mathcal{R}^{0}}$ is an ideal of $\mathcal{E}_{\widehat{\mathcal{S}^{\ast }}%
}^{\mathcal{R}^{0}}.$

$(iii)$\ The ideal $\mathcal{N}_{\widehat{\mathcal{S}^{\ast }}}^{\mathcal{R}%
^{0}}=\mathcal{N}_{\widehat{\mathcal{S}^{\ast }}}$ , where
\begin{equation*}
\mathcal{N}_{\widehat{\mathcal{S}^{\ast }}}:=\left\{ \left( u_{\epsilon
}\right) _{\epsilon }\in \widehat{\mathcal{S}^{\ast }}^{I}:\forall m\in
\mathbb{Z},\forall \beta \in \mathbb{Z}_{+}^{n},\underset{\xi \in \mathbb{R}%
^{n}}{\sup }\left\vert \xi ^{\beta }\widehat{u_{\epsilon }}\left( \xi
\right) \right\vert =O\left( \epsilon ^{m}\right) ,\epsilon \longrightarrow
0\right\}
\end{equation*}
\end{proposition}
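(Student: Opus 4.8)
The plan is to recognize the three assertions as the statements of the proposition of Section~4 (on $\mathcal{E}_{\mathcal{S}_{\ast}}^{\mathcal{R}}$, $\mathcal{N}_{\mathcal{S}_{\ast}}^{\mathcal{R}}$ and $\mathcal{N}_{\mathcal{S}_{\ast}}$) read on the Fourier side. Indeed, the defining estimates of $\mathcal{E}_{\widehat{\mathcal{S}^{\ast}}}^{\mathcal{R}^{0}}$, $\mathcal{N}_{\widehat{\mathcal{S}^{\ast}}}^{\mathcal{R}^{0}}$ and $\mathcal{N}_{\widehat{\mathcal{S}^{\ast}}}$ are exactly the $\mathcal{S}_{\ast}$-type estimates imposed on the family of Fourier transforms $(\widehat{u_{\epsilon}})_{\epsilon}$, and $\mathcal{R}^{0}$ is a regular subset of $\mathbb{R}_{+}^{\mathbb{Z}_{+}}$ by the Lemma. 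So the regularity axioms (R1)--(R3), now applied to $\mathcal{R}^{0}$, drive the whole argument as in Section~4. The one genuinely new feature is that the product on $\widehat{\mathcal{S}^{\ast}}^{I}$ is the pointwise product of the $u_{\epsilon}$ while the estimates are carried by $\widehat{u_{\epsilon}}$, so multiplicativity must be read through a convolution; this is where I expect the work to lie.

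For $(iii)$ I would argue purely from the regularity of $\mathcal{R}^{0}$, exactly as for the corresponding assertion in Section~4. The inclusion $\mathcal{N}_{\widehat{\mathcal{S}^{\ast}}}\subset \mathcal{N}_{\widehat{\mathcal{S}^{\ast}}}^{\mathcal{R}^{0}}$ is immediate: given $N\in\mathcal{R}^{0}$ and $\beta$, pick an integer $m\geq N_{|\beta|}$, so that $O(\epsilon^{m})\subset O(\epsilon^{N_{|\beta|}})$ as $\epsilon\to 0$. For the reverse inclusion, fix $m\in\mathbb{Z}_{+}$ and $\beta$; applying (R1) to any fixed sequence of $\mathcal{R}^{0}$ with $k=0$ and $k'=m$ yields $N\in\mathcal{R}^{0}$ with $N_{|\beta|}\geq m$, whence the $\mathcal{N}_{\widehat{\mathcal{S}^{\ast}}}^{\mathcal{R}^{0}}$ estimate gives $O(\epsilon^{N_{|\beta|}})\subset O(\epsilon^{m})$. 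Closure under sums and linearity of the three spaces follow by dominating two members of $\mathcal{R}^{0}$ by a single one via (R2).

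The heart of $(i)$ and $(ii)$ is closure under the product, which I expect to be the \emph{main obstacle}. From $\widehat{u_{\epsilon}v_{\epsilon}}=(2\pi)^{-n/2}\,\widehat{u_{\epsilon}}\ast\widehat{v_{\epsilon}}$ and the binomial expansion $\xi^{\beta}=\sum_{\gamma\leq\beta}\binom{\beta}{\gamma}(\xi-\eta)^{\gamma}\eta^{\beta-\gamma}$ inserted in the convolution integral, each term factors as a sup-norm $\sup_{\zeta}|\zeta^{\gamma}\widehat{u_{\epsilon}}(\zeta)|$ times an $L^{1}$-moment $\int|\eta^{\beta-\gamma}\widehat{v_{\epsilon}}(\eta)|\,d\eta$. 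The moment is finite and, after inserting the integrable weight $(1+|\eta|)^{-(n+1)}$, is controlled by finitely many sup-norms $\sup_{\eta}|\eta^{\delta}\widehat{v_{\epsilon}}(\eta)|$ with $|\delta|\leq|\beta-\gamma|+n+1$. Since $|\gamma|+|\delta|\leq|\beta|+n+1$, one absorbs the two indices into a single member of $\mathcal{R}^{0}$ by (R3), passes to a non-decreasing dominating sequence and removes the shift $n+1$ by (R1), obtaining one sequence $N\in\mathcal{R}^{0}$, independent of $\beta$, with $\sup_{\xi}|\xi^{\beta}\widehat{u_{\epsilon}v_{\epsilon}}(\xi)|=O(\epsilon^{-N_{|\beta|}})$. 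This gives $(i)$; the same convolution estimate, run with the non-negative exponents of an $\mathcal{N}$-factor against an $\mathcal{E}$-factor, yields the ideal property $(ii)$. The index bookkeeping with the regular set, together with the passage to a dominating moment, is the only delicate point; the rest is formal.
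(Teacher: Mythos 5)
The paper offers no proof of this proposition at all --- it is introduced with ``then the following proposition is easy to prove'' --- so there is no argument to compare yours against line by line; your proposal is, in effect, the proof the authors left out, and it is essentially correct. Your treatment of $(iii)$ and of closure under sums, via (R1) and (R2) applied to the regular set $\mathcal{R}^{0}$, is exactly the routine argument the authors have in mind, and you correctly isolate the one genuinely non-formal point: the product is taken on the $u_{\epsilon}$ side while the estimates live on the $\widehat{u_{\epsilon}}$ side, so multiplicativity must go through moments of the convolution $\widehat{u_{\epsilon}}\ast\widehat{v_{\epsilon}}$, handled by the binomial splitting and the integrable weight $\left( 1+\left\vert \eta \right\vert \right) ^{-\left( n+1\right) }$. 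One step, however, is asserted rather than proved: ``passes to a non-decreasing dominating sequence.'' This is not among the axioms (R1)--(R3), and regular sets are not assumed closed under running maxima; as written this is a (small, fillable) gap, and it genuinely matters in your bookkeeping because your indices $j=\left\vert \gamma \right\vert +\left\vert \delta \right\vert$ may fall below $\left\vert \beta \right\vert$, where (R1) gives no help. Two repairs: either observe that in your decomposition one may keep $\delta \geq \beta -\gamma$ componentwise, so that $\left\vert \beta \right\vert \leq \left\vert \gamma \right\vert +\left\vert \delta \right\vert \leq \left\vert \beta \right\vert +n+1$ and only the finitely many shifts $i=0,\dots ,n+1$ occur, each removed by (R1) and the resulting finitely many sequences dominated by one via (R2); or prove the majorization you invoke directly from (R3) and non-negativity, namely apply (R3) to a sequence $P\in \mathcal{R}^{0}$ against itself to get $P''\in \mathcal{R}^{0}$ with $P_{j}\leq P_{j}+P_{L-j}\leq P''_{L}$ for all $j\leq L$. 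With either repair your argument is complete: it gives $(i)$, then $(ii)$ as you say (running the $\mathcal{N}$-factor, with arbitrary exponents thanks to $(iii)$, against the finitely many $\mathcal{E}$-exponents appearing for each fixed $\beta$), and $(iii)$.
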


The following proposition characterizes $\mathcal{N}_{\widehat{\mathcal{S}%
^{\ast }}}.$

\begin{proposition}
\label{propos}Let $\left( u_{\epsilon }\right) _{\epsilon }$ $\in \mathcal{E}%
_{\widehat{\mathcal{S}^{\ast }}}^{\mathcal{R}^{0}},$ then $\left(
u_{\epsilon }\right) _{\epsilon }$ $\in \mathcal{N}_{\widehat{\mathcal{S}%
^{\ast }}}^{\mathcal{R}^{0}}$ if and only if the following condition is
satisfied
\begin{equation}
\forall m\in \mathbb{Z}_{+},\underset{\xi \in \mathbb{R}^{n}}{\sup }%
\left\vert \widehat{u_{\epsilon }}\left( \xi \right) \right\vert =O\left(
\epsilon ^{m}\right) \text{ },\text{ }\epsilon \longrightarrow 0
\end{equation}
\end{proposition}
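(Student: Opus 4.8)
The plan is to recognise that this proposition is nothing more than the Fourier-side restatement of Proposition \ref{prop.null}. Indeed, writing $g_\epsilon := \widehat{u_\epsilon}$, the membership $(u_\epsilon)_\epsilon \in \mathcal{E}_{\widehat{\mathcal{S}^{\ast}}}^{\mathcal{R}^0}$ is exactly the $\mathcal{E}_{\mathcal{S}_{\ast}}^{\mathcal{R}}$-type estimate $\exists N \in \mathcal{R}^0$, $\forall \beta$, $\sup_\xi |\xi^\beta g_\epsilon(\xi)| = O(\epsilon^{-N_{|\beta|}})$, while $(u_\epsilon)_\epsilon \in \mathcal{N}_{\widehat{\mathcal{S}^{\ast}}}^{\mathcal{R}^0}$ is the corresponding $\mathcal{N}_{\mathcal{S}_{\ast}}$-type estimate, now taken over $\xi \in \mathbb{R}^n$. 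So I would simply transcribe the argument of Proposition \ref{prop.null} with $g_\epsilon$ in place of $u_\epsilon$ and $\mathbb{R}^n$ in place of the box $\Omega$.

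The forward implication is immediate: specialising $\beta = 0$ in the defining estimate of $\mathcal{N}_{\widehat{\mathcal{S}^{\ast}}}^{\mathcal{R}^0}$ gives exactly the displayed condition. For the converse, I would start from membership in $\mathcal{E}_{\widehat{\mathcal{S}^{\ast}}}^{\mathcal{R}^0}$, which supplies an $N \in \mathcal{R}^0$ with $\sup_\xi |\xi^{2\beta} \widehat{u_\epsilon}(\xi)| = O(\epsilon^{-N_{2|\beta|}})$ for each $\beta$, and combine it with the hypothesis applied at the exponent $2m + N_{2|\beta|}$, namely $\sup_\xi |\widehat{u_\epsilon}(\xi)| = O(\epsilon^{2m + N_{2|\beta|}})$. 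Using the identity $\xi^{2\beta} = (\xi^\beta)^2$ and multiplying the two estimates pointwise yields
\[
|\xi^\beta \widehat{u_\epsilon}(\xi)|^2 = |\xi^{2\beta} \widehat{u_\epsilon}(\xi)|\,|\widehat{u_\epsilon}(\xi)| = O(\epsilon^{2m}),
\]
and taking square roots gives $\sup_\xi |\xi^\beta \widehat{u_\epsilon}(\xi)| = O(\epsilon^m)$ for all $m$ and all $\beta$, i.e. $(u_\epsilon)_\epsilon \in \mathcal{N}_{\widehat{\mathcal{S}^{\ast}}}^{\mathcal{R}^0}$.

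I do not expect any real obstacle here; the single point to watch is the absorption of the negative power $-N_{2|\beta|}$, which is legitimate precisely because the hypothesis furnishes $O(\epsilon^M)$ decay for arbitrarily large $M$, so one fixes $N$ and $\beta$ first and only then chooses $M = 2m + N_{2|\beta|}$. Note that neither smoothness of $\widehat{u_\epsilon}$ nor any of the regularity axioms (R1)--(R3) of $\mathcal{R}^0$ enters this direction: the mechanism is purely the squaring trick of Proposition \ref{prop.null}, transported to the frequency variable.
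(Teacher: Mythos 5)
Your proposal is correct and is exactly the paper's intended argument: the paper's proof consists of the single remark that it is ``similar to that of Proposition~\ref{prop.null}'', and your transcription of the squaring trick $\left\vert \xi ^{\beta }\widehat{u_{\epsilon }}\left( \xi \right) \right\vert ^{2}=\left\vert \xi ^{2\beta }\widehat{u_{\epsilon }}\left( \xi \right) \right\vert \left\vert \widehat{u_{\epsilon }}\left( \xi \right) \right\vert$ to the frequency side, with the exponent $2m+N_{2\left\vert \beta \right\vert }$ absorbed by the arbitrarily fast decay hypothesis, is precisely that proof. Your remark about fixing $N$ and $\beta$ before choosing the exponent is the right point of care, and the whole argument is sound.
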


\begin{proof}
The proof is similar to that of proposition \ref{prop.null}.
\end{proof}

\begin{definition}
The algebra $\mathcal{G}_{\widehat{\mathcal{S}^{\ast }}}^{\mathcal{R}^{0}}$
is defined as the quotient algebra
\begin{equation*}
\mathcal{G}_{\widehat{\mathcal{S}^{\ast }}}^{\mathcal{R}^{0}}=\frac{\mathcal{%
E}_{\widehat{\mathcal{S}^{\ast }}}^{\mathcal{R}^{0}}}{\mathcal{N}_{\widehat{%
\mathcal{S}^{\ast }}}}
\end{equation*}
\end{definition}

The next theorem is the second characterization of $\mathcal{G}_{\mathcal{S}%
}^{\widetilde{\mathcal{R}}}.$

\begin{theorem}
\label{prop2}We have
\begin{equation*}
\mathcal{G}_{\mathcal{S}}^{\widetilde{\mathcal{R}}}=\mathcal{G}_{\mathcal{S}%
_{\ast }}^{\mathcal{R}_{0}}\cap \mathcal{G}_{\widehat{\mathcal{S}^{\ast }}}^{%
\mathcal{R}^{0}}
\end{equation*}
\end{theorem}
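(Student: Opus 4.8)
The plan is to deduce this from Theorem~\ref{th10} and then trade the factor $\mathcal{G}_{\mathcal{S}^{\ast}}^{\mathcal{R}^{0}}$ for $\mathcal{G}_{\widehat{\mathcal{S}^{\ast}}}^{\mathcal{R}^{0}}$ using the elementary Fourier inequalities that link the sup-norm of a function with the $L^{1}$-norm of its transform. Since $\mathbb{R}^{n}$ is a box, Theorem~\ref{th10} already gives $\mathcal{G}_{\mathcal{S}}^{\widetilde{\mathcal{R}}}=\mathcal{G}_{\mathcal{S}_{\ast}}^{\mathcal{R}_{0}}\cap\mathcal{G}_{\mathcal{S}^{\ast}}^{\mathcal{R}^{0}}$, so, exactly as in that proof, it suffices to establish the two representative-level identities $\mathcal{E}_{\mathcal{S}}^{\widetilde{\mathcal{R}}}=\mathcal{E}_{\mathcal{S}_{\ast}}^{\mathcal{R}_{0}}\cap\mathcal{E}_{\widehat{\mathcal{S}^{\ast}}}^{\mathcal{R}^{0}}$ and $\mathcal{N}_{\mathcal{S}}=\mathcal{N}_{\mathcal{S}_{\ast}}\cap\mathcal{N}_{\widehat{\mathcal{S}^{\ast}}}$. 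Throughout I would use $\widehat{\partial^{\beta}u_{\epsilon}}(\xi)=(i\xi)^{\beta}\widehat{u_{\epsilon}}(\xi)$, the bound $\|\widehat{v}\|_{\infty}\leq(2\pi)^{-n/2}\|v\|_{L^{1}}$, the integrability of $(1+|\xi|)^{-(n+1)}$, and the polynomial estimate $(1+|\xi|)^{M}\leq C\sum_{|\beta|\leq M}|\xi^{\beta}|$ (which follows by expanding $(1+\|\xi\|_{1})^{M}$, so that the degree range stays exactly $\leq M$ with no scaling in $M$).

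For the inclusion $\mathcal{E}_{\mathcal{S}}^{\widetilde{\mathcal{R}}}\subset\mathcal{E}_{\mathcal{S}_{\ast}}^{\mathcal{R}_{0}}\cap\mathcal{E}_{\widehat{\mathcal{S}^{\ast}}}^{\mathcal{R}^{0}}$ I would take $(u_{\epsilon})_{\epsilon}\in\mathcal{E}_{\mathcal{S}}^{\widetilde{\mathcal{R}}}$ with governing sequence $\widetilde{N}\in\widetilde{\mathcal{R}}$; membership in $\mathcal{E}_{\mathcal{S}_{\ast}}^{\mathcal{R}_{0}}$ is immediate from Theorem~\ref{th10}. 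For the transform factor I estimate $\sup_{\xi}|\xi^{\beta}\widehat{u_{\epsilon}}(\xi)|=\sup_{\xi}|\widehat{\partial^{\beta}u_{\epsilon}}(\xi)|\leq(2\pi)^{-n/2}\|\partial^{\beta}u_{\epsilon}\|_{L^{1}}$, then dominate the $L^{1}$-norm by the weighted sup $\sup_{x}(1+|x|)^{n+1}|\partial^{\beta}u_{\epsilon}(x)|$, which is bounded by a finite sum of the mixed quantities $\sup_{x}|x^{\gamma}\partial^{\beta}u_{\epsilon}(x)|$ with $|\gamma|\leq n+1$. These are precisely the $\mathcal{E}_{\mathcal{S}}^{\widetilde{\mathcal{R}}}$ estimates, of order $O(\epsilon^{-\widetilde{N}_{|\beta|,|\gamma|}})$, and it remains to collapse the finite maximum over $|\gamma|\leq n+1$ into a single sequence of $\mathcal{R}^{0}$.

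For the reverse inclusion I would take $(u_{\epsilon})_{\epsilon}\in\mathcal{E}_{\mathcal{S}_{\ast}}^{\mathcal{R}_{0}}\cap\mathcal{E}_{\widehat{\mathcal{S}^{\ast}}}^{\mathcal{R}^{0}}$; then each $u_{\epsilon}$ lies in $\mathcal{S}_{\ast}\cap\widehat{\mathcal{S}^{\ast}}$, so $\widehat{u_{\epsilon}}$ decays faster than any polynomial, Fourier inversion holds, and $\partial^{\alpha}u_{\epsilon}(x)=(2\pi)^{-n/2}\int e^{ix\xi}(i\xi)^{\alpha}\widehat{u_{\epsilon}}(\xi)\,d\xi$. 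I bound $\sup_{x}|\partial^{\alpha}u_{\epsilon}(x)|\leq(2\pi)^{-n/2}\int|\xi^{\alpha}\widehat{u_{\epsilon}}(\xi)|\,d\xi$, factor out an integrable $(1+|\xi|)^{-(n+1)}$, and reduce to $\sup_{\xi}|\xi^{\beta}\widehat{u_{\epsilon}}(\xi)|$ over $|\beta|\leq|\alpha|+n+1$, which are the $\mathcal{E}_{\widehat{\mathcal{S}^{\ast}}}^{\mathcal{R}^{0}}$ estimates of order $O(\epsilon^{-N_{|\beta|}})$ for some $N\in\mathcal{R}^{0}$. This yields $(u_{\epsilon})_{\epsilon}\in\mathcal{E}_{\mathcal{S}^{\ast}}^{\mathcal{R}^{0}}$, and Theorem~\ref{th10} then delivers $(u_{\epsilon})_{\epsilon}\in\mathcal{E}_{\mathcal{S}}^{\widetilde{\mathcal{R}}}$.

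The main obstacle is not analytic but combinatorial: in each direction I must manufacture a single controlling sequence in $\mathcal{R}^{0}$ out of a finite maximum of shifted indices, and the non-monotonicity of the sequences in $\mathcal{R}$ forbids reading this off directly. The device that makes it work is the superadditivity axiom: applying $(\widetilde{R}3)$ to $\widetilde{N}$ with itself and discarding a nonnegative term gives $\widetilde{N}''\in\widetilde{\mathcal{R}}$ with $\widetilde{N}''_{q,n+1}\geq\max_{l\leq n+1}\widetilde{N}_{q,l}$, after which $(\widetilde{R}1)$ lowers the second index to $0$ and lands the bound in $\mathcal{R}^{0}$; symmetrically, $(R3)$ applied to $N\in\mathcal{R}^{0}$ produces $N''$ with $N''_{k}\geq\max_{j\leq k}N_{j}$, and $(R1)$ absorbs the shift by $n+1$. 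Finally, the null identity $\mathcal{N}_{\mathcal{S}}=\mathcal{N}_{\mathcal{S}_{\ast}}\cap\mathcal{N}_{\widehat{\mathcal{S}^{\ast}}}$ is proved by the same two chains of inequalities with $O(\epsilon^{-N})$ replaced by $O(\epsilon^{m})$ for every $m$ (here Proposition~\ref{propos} may be invoked); in this regime no regular-set bookkeeping is needed, since a finite maximum of $O(\epsilon^{m})$ terms is again $O(\epsilon^{m})$, and one concludes using the identity $\mathcal{N}_{\mathcal{S}}=\mathcal{N}_{\mathcal{S}_{\ast}}\cap\mathcal{N}_{\mathcal{S}^{\ast}}$ coming from Theorem~\ref{th10}.
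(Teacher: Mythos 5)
Your proof is correct and takes essentially the same route as the paper's: both directions rest on the $\mathbb{L}^{1}$--$\mathbb{L}^{\infty}$ mapping property of the Fourier transform (forward transform to get $\mathcal{E}_{\mathcal{S}}^{\widetilde{\mathcal{R}}}\subset\mathcal{E}_{\widehat{\mathcal{S}^{\ast }}}^{\mathcal{R}^{0}}$, inversion to get $\mathcal{E}_{\widehat{\mathcal{S}^{\ast }}}^{\mathcal{R}^{0}}\subset\mathcal{E}_{\mathcal{S}^{\ast }}^{\mathcal{R}^{0}}$), combined with the representative-level identities from Theorem \ref{th10} and an identical treatment of the null ideals. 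Your explicit $(\widetilde{R}1)$/$(\widetilde{R}3)$ bookkeeping merely spells out what the paper compresses into the terse steps $O\left( \epsilon ^{-N_{\left\vert \beta \right\vert ,2n}^{\prime }}\right) =O\left( \epsilon ^{-N_{\left\vert \beta \right\vert ,0}}\right) $ and $O\left( \epsilon ^{-N_{\left\vert \beta \right\vert +2n}^{\prime }}\right) =O\left( \epsilon ^{-N_{\left\vert \beta \right\vert }}\right) $.
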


\begin{proof}
Let $\left( u_{\epsilon }\right) _{\epsilon }\in \mathcal{E}_{\widehat{%
\mathcal{S}^{\ast }}}^{\mathcal{R}^{0}}$, then $\exists C>0$\ such that
\begin{eqnarray*}
\int \left\vert x^{\beta }\widehat{u_{\epsilon }}\left( x\right)
\right\vert dx &\leq &C\underset{x\in \mathbb{R}^{n}}{\sup }\left(
1+\left\vert x\right\vert ^{2}\right) ^{n}\left\vert x^{\beta }\widehat{%
u_{\epsilon }}\left( x\right) \right\vert  \\
&=&O\left( \epsilon ^{-N_{\left\vert \beta \right\vert +2n}^{\prime
}}\right) ,\text{ }\epsilon \longrightarrow 0 \\
&=&O\left( \epsilon ^{-N_{\left\vert \beta \right\vert }}\right) ,\text{ }%
\epsilon \longrightarrow 0,
\end{eqnarray*}%
for some $N\in \mathcal{R}^{0}$. The continuity of $\mathcal{F}$ from $%
\mathbb{L}^{1}$ to $\mathbb{L}^{\infty }$ gives
\begin{equation*}
\left\vert \left\vert \partial ^{\beta }u_{\epsilon }\right\vert \right\vert
_{\mathbb{L}^{\infty }}=O\left( \epsilon ^{-N_{\left\vert \beta \right\vert
}}\right) ,\text{ }\epsilon \longrightarrow 0,
\end{equation*}%
which shows that $\left( u_{\epsilon }\right) _{\epsilon }\in \mathcal{E}_{%
\mathcal{S}^{\star }}^{\mathcal{R}^{0}}$ and therefore $\mathcal{E}_{%
\widehat{\mathcal{S}^{\ast }}}^{\mathcal{R}^{0}}\subset \mathcal{E}_{%
\mathcal{S}^{\ast }}^{\mathcal{R}^{0}}.$ Consequently $\mathcal{E}_{\mathcal{%
S}_{\ast }}^{\mathcal{R}_{0}}\cap \mathcal{E}_{\widehat{\mathcal{S}^{\ast }}%
}^{\mathcal{R}^{0}}\subset \mathcal{E}_{\mathcal{S}}^{\widetilde{\mathcal{R}}%
}.$ In order to show the inverse inclusion let us mention, at first, that
from \cite{SY}, we have
\begin{equation*}
\left( u_{\epsilon }\right) _{\epsilon }\in \mathcal{S}^{I}%
\Longleftrightarrow \left( u_{\epsilon }\right) _{\epsilon }\in \mathcal{S}%
_{\ast }^{I}\bigcap \widehat{\mathcal{S}^{\ast }}^{I}
\end{equation*}%
which implies in particular that $\mathcal{S}\subset $ $\widehat{\mathcal{S}%
^{\ast }}.$ On the other hand if $\left( u_{\epsilon }\right) _{\epsilon
}\in \mathcal{E}_{\mathcal{S}}^{\widetilde{\mathcal{R}}}$, then
\begin{eqnarray*}
\int \left\vert \partial ^{\beta }u_{\epsilon }\left( x\right)
\right\vert dx &\leq &C\underset{x\in \mathbb{R}^{n}}{\sup }\left(
1+\left\vert x\right\vert ^{2}\right) ^{n}\left\vert \partial
^{\beta }u_{\epsilon
}\left( x\right) \right\vert  \\
&=&O\left( \epsilon ^{-N_{\left\vert \beta \right\vert ,2n}^{^{\prime
}}}\right) ,\text{ }\epsilon \longrightarrow 0 \\
&=&O\left( \epsilon ^{-N_{\left\vert \beta \right\vert ,0}}\right) ,\text{ }%
\epsilon \longrightarrow 0,
\end{eqnarray*}%
i.e.
\begin{equation*}
\int \left\vert \partial ^{\beta }u_{\epsilon }\left( x\right)
\right\vert
dx=O\left( \epsilon ^{-N_{\left\vert \beta \right\vert }}\right) ,\text{ }%
\epsilon \longrightarrow 0,
\end{equation*}%
for some $N\in \mathcal{R}^{0}.$ The continuity of $\mathcal{F}$ from $%
\mathbb{L}^{1}$ to $\mathbb{L}^{\infty }$ gives
\begin{equation*}
\left\vert \left\vert \xi ^{\beta }\widehat{u_{\epsilon }}\right\vert
\right\vert _{\mathbb{L}^{\infty }}=O\left( \epsilon ^{-N_{\left\vert \beta
\right\vert }}\right) ,\text{ }\epsilon \longrightarrow 0,
\end{equation*}%
which shows that $\left( u_{\epsilon }\right) _{\epsilon }\in \mathcal{E}_{%
\widehat{\mathcal{S}^{\ast }}}^{\mathcal{R}^{0}}$ and consequently $\left(
u_{\epsilon }\right) _{\epsilon }\in \mathcal{E}_{\mathcal{S}_{\ast }}^{%
\mathcal{R}_{0}}\cap \mathcal{E}_{\widehat{\mathcal{S}^{\ast }}}^{\mathcal{R}%
^{0}}.$ Thus $\mathcal{E}_{\mathcal{S}}^{\widetilde{\mathcal{R}}}\subset
\mathcal{E}_{\mathcal{S}_{\ast }}^{\mathcal{R}_{0}}\cap \mathcal{E}_{%
\widehat{\mathcal{S}^{\ast }}}^{\mathcal{R}^{0}},$ it follows that $\mathcal{%
E}_{\mathcal{S}}^{\widetilde{\mathcal{R}}}=\mathcal{E}_{\mathcal{S}_{\ast
}}^{\mathcal{R}_{0}}\cap \mathcal{E}_{\widehat{\mathcal{S}^{\ast }}}^{%
\mathcal{R}^{0}}.$ A similar proof shows that $\mathcal{N}_{\mathcal{S}}=%
\mathcal{N}_{\mathcal{S}_{\ast }}\bigcap \mathcal{N}_{\widehat{\mathcal{S}%
^{\ast }}}$. Therefore $\mathcal{G}_{\mathcal{S}}^{\widetilde{\mathcal{R}}}=%
\mathcal{G}_{\mathcal{S}_{\ast }}^{\mathcal{R}_{0}}\cap \mathcal{G}_{%
\widehat{\mathcal{S}^{\ast }}}^{\mathcal{R}^{0}}$.
\end{proof}

The following corollary gives a second characterization of the space $%
\mathcal{N}_{\mathcal{S}}.$

\begin{corollary}
An element $\left( u_{\epsilon }\right) _{\epsilon }$ $\in \mathcal{E}_{%
\mathcal{S}}^{\widetilde{\mathcal{R}}}$ is in $\mathcal{N}_{\mathcal{S}}$ if
and only if the following condition is satisfied
\begin{equation}
\forall m\in \mathbb{Z}_{+},\underset{\xi \in \mathbb{R}^{n}}{\sup }%
\left\vert \widehat{u_{\epsilon }}\left( \xi \right) \right\vert =O\left(
\epsilon ^{m}\right) ,\epsilon \longrightarrow 0  \label{null f}
\end{equation}
\end{corollary}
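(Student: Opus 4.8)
The plan is to derive this corollary directly from Theorem \ref{prop2} together with the null characterization of $\mathcal{N}_{\widehat{\mathcal{S}^{\ast }}}$ established in Proposition \ref{propos}. By Theorem \ref{prop2} we have the equality $\mathcal{N}_{\mathcal{S}}=\mathcal{N}_{\mathcal{S}_{\ast }}\cap \mathcal{N}_{\widehat{\mathcal{S}^{\ast }}}$ (this is exactly the ideal-level identity proved inside that theorem), so membership of $\left( u_{\epsilon }\right) _{\epsilon }$ in $\mathcal{N}_{\mathcal{S}}$ is equivalent to its simultaneous membership in $\mathcal{N}_{\mathcal{S}_{\ast }}$ and $\mathcal{N}_{\widehat{\mathcal{S}^{\ast }}}$.

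First I would prove the forward implication, which is immediate: if $\left( u_{\epsilon }\right) _{\epsilon }\in \mathcal{N}_{\mathcal{S}}$, then in particular the estimate with $\alpha =\beta =0$ applied to $\widehat{u_{\epsilon }}$ (using that the Fourier transform maps $\mathcal{N}_{\mathcal{S}}$ into itself, equivalently that $\left( u_{\epsilon }\right) _{\epsilon }\in \mathcal{N}_{\widehat{\mathcal{S}^{\ast }}}$ by the theorem) gives $\underset{\xi }{\sup }\,\lvert \widehat{u_{\epsilon }}(\xi )\rvert =O(\epsilon ^{m})$ for all $m$, which is precisely condition (\ref{null f}).

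For the converse I would start from the hypothesis that $\left( u_{\epsilon }\right) _{\epsilon }\in \mathcal{E}_{\mathcal{S}}^{\widetilde{\mathcal{R}}}$ satisfies (\ref{null f}). By the inclusion $\mathcal{E}_{\mathcal{S}}^{\widetilde{\mathcal{R}}}\subset \mathcal{E}_{\widehat{\mathcal{S}^{\ast }}}^{\mathcal{R}^{0}}$ obtained in the proof of Theorem \ref{prop2}, the sequence $\left( u_{\epsilon }\right) _{\epsilon }$ lies in $\mathcal{E}_{\widehat{\mathcal{S}^{\ast }}}^{\mathcal{R}^{0}}$; since it moreover satisfies (\ref{null f}), Proposition \ref{propos} yields at once that $\left( u_{\epsilon }\right) _{\epsilon }\in \mathcal{N}_{\widehat{\mathcal{S}^{\ast }}}^{\mathcal{R}^{0}}=\mathcal{N}_{\widehat{\mathcal{S}^{\ast }}}$. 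It remains to show that $\left( u_{\epsilon }\right) _{\epsilon }\in \mathcal{N}_{\mathcal{S}_{\ast }}$ as well, after which Theorem \ref{prop2} closes the argument. The main obstacle is exactly this last step: condition (\ref{null f}) is a statement about $\widehat{u_{\epsilon }}$, whereas $\mathcal{N}_{\mathcal{S}_{\ast }}$ is a condition on $u_{\epsilon }$ itself. I would bridge this by passing through the $\mathbb{L}^{1}$–$\mathbb{L}^{\infty }$ continuity of $\mathcal{F}$ as in Theorem \ref{prop2}: the hypothesis controls $\widehat{u_{\epsilon }}$ and its moments $\xi^{\beta}\widehat{u_{\epsilon}}$ (the latter using that $\left( u_{\epsilon }\right)_{\epsilon }\in \mathcal{N}_{\widehat{\mathcal{S}^{\ast }}}$), and bounding $\lVert u_{\epsilon }\rVert _{\mathbb{L}^{\infty }}$ by the $\mathbb{L}^{1}$ norm of $\widehat{u_{\epsilon }}$ (weighted by a factor $(1+\lvert\xi\rvert^{2})^{n}$ to secure integrability) gives $\underset{x}{\sup }\,\lvert u_{\epsilon }(x)\rvert =O(\epsilon ^{m})$ for all $m$.

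Having established this $\mathbb{L}^{\infty }$ decay of $u_{\epsilon }$, I would invoke Proposition \ref{prop.null}, whose hypothesis is precisely an estimate of the form $\underset{x}{\sup }\,\lvert u_{\epsilon }(x)\rvert =O(\epsilon ^{m})$ for an element of $\mathcal{E}_{\mathcal{S}_{\ast }}^{\mathcal{R}}$; this proposition then upgrades the zeroth-order decay to the full family of weighted estimates defining $\mathcal{N}_{\mathcal{S}_{\ast }}$. Thus $\left( u_{\epsilon }\right) _{\epsilon }$ lies in both $\mathcal{N}_{\mathcal{S}_{\ast }}$ and $\mathcal{N}_{\widehat{\mathcal{S}^{\ast }}}$, so by Theorem \ref{prop2} it belongs to $\mathcal{N}_{\mathcal{S}}$, completing the equivalence.
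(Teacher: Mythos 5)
Your proposal is correct and follows exactly the route the paper intends for this corollary, which it leaves implicit: the ideal-level identity $\mathcal{N}_{\mathcal{S}}=\mathcal{N}_{\mathcal{S}_{\ast }}\cap \mathcal{N}_{\widehat{\mathcal{S}^{\ast }}}$ from Theorem \ref{prop2}, Proposition \ref{propos} to upgrade condition (\ref{null f}) to membership in $\mathcal{N}_{\widehat{\mathcal{S}^{\ast }}}$, the $\mathbb{L}^{1}$--$\mathbb{L}^{\infty }$ Fourier bound to transfer the decay back to $u_{\epsilon }$, and Proposition \ref{prop.null} to conclude membership in $\mathcal{N}_{\mathcal{S}_{\ast }}$. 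You have simply filled in the details the paper omits, and each step is sound.
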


\section{Consequences}

We know that when $\widetilde{\mathcal{R}}=\mathbb{R}_{+}^{\mathbb{Z}%
_{+}^{2}}$ we obtain $\mathcal{G}_{\mathcal{S}}^{\mathbb{R}_{+}^{\mathbb{Z}%
_{+}^{2}}}=\mathcal{G}_{\mathcal{S}}.$ Theorem \ref{th10} gives the
following corollary which is a characterization of the algebra of rapidly
decreasing generalized functions.

\begin{corollary}
We have
\begin{equation*}
\mathcal{G}_{\mathcal{S}}=\mathcal{G}_{\mathcal{S}^{\ast }}\cap \mathcal{G}_{%
\mathcal{S}_{\ast }}\text{ ,}
\end{equation*}
where
\begin{equation*}
\mathcal{G}_{\mathcal{S}^{\ast }}:=\frac{\left\{ \left( u_{\epsilon }\right)
_{\epsilon }\in \mathcal{S}^{\ast I}:\forall \alpha \in \mathbb{Z}%
_{+}^{n},\exists m\in \mathbb{Z}_{+},\underset{x\in \mathbb{R}^{n}}{\sup }%
\left\vert \partial ^{\alpha }u_{\epsilon }\left( x\right) \right\vert
=O\left( \epsilon ^{-m}\right) ,\epsilon \longrightarrow 0\right\} }{\left\{
\left( u_{\epsilon }\right) _{\epsilon }\in \mathcal{S}^{\ast I}:\forall
\alpha \in \mathbb{Z}_{+}^{n},\forall m\in \mathbb{Z}_{+},\underset{x\in
\mathbb{R}^{n}}{\sup }\left\vert \partial ^{\alpha }u_{\epsilon }\left(
x\right) \right\vert =O\left( \epsilon ^{m}\right) ,\epsilon \longrightarrow
0\right\} }\text{ ,}
\end{equation*}
and
\begin{equation*}
\mathcal{G}_{\mathcal{S}_{\ast }}:=\frac{\left\{ \left( u_{\epsilon }\right)
_{\epsilon }\in \mathcal{S}_{\ast }^{I}:\forall \beta \in \mathbb{Z}%
_{+}^{n},\exists m\in \mathbb{Z}_{+},\underset{x\in \mathbb{R}^{n}}{\sup }%
\left\vert x^{\beta }u_{\epsilon }\left( x\right) \right\vert =O\left(
\epsilon ^{-m}\right) ,\epsilon \longrightarrow 0\right\} }{\left\{ \left(
u_{\epsilon }\right) _{\epsilon }\in \mathcal{S}_{\ast }^{I}:\forall \beta
\in \mathbb{Z}_{+}^{n},\forall m\in \mathbb{Z}_{+},\underset{x\in \mathbb{R}%
^{n}}{\sup }\left\vert x^{\beta }u_{\epsilon }\left( x\right) \right\vert
=O\left( \epsilon ^{m}\right) ,\epsilon \longrightarrow 0\right\} }
\end{equation*}
\end{corollary}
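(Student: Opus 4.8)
The plan is to deduce the corollary purely by specializing Theorem \ref{th10} to the regular set $\widetilde{\mathcal{R}}=\mathbb{R}_{+}^{\mathbb{Z}_{+}^{2}}$ and the open set $\Omega=\mathbb{R}^{n}$. First I would verify the hypotheses: $\mathbb{R}^{n}=\mathbf{I}_{1}\times\cdots\times\mathbf{I}_{n}$ with each $\mathbf{I}_{i}=\mathbb{R}$ is a box, and $\mathbb{R}_{+}^{\mathbb{Z}_{+}^{2}}$ is regular by the corresponding example. Since $\mathcal{G}_{\mathcal{S}}^{\mathbb{R}_{+}^{\mathbb{Z}_{+}^{2}}}=\mathcal{G}_{\mathcal{S}}$, Theorem \ref{th10} immediately gives
\[
\mathcal{G}_{\mathcal{S}}=\mathcal{G}_{\mathcal{S}_{\ast}}^{\mathcal{R}_{0}}\left(\mathbb{R}^{n}\right)\cap\mathcal{G}_{\mathcal{S}^{\ast}}^{\mathcal{R}^{0}}\left(\mathbb{R}^{n}\right),
\]
so it remains only to identify the two factors on the right with $\mathcal{G}_{\mathcal{S}_{\ast}}$ and $\mathcal{G}_{\mathcal{S}^{\ast}}$.

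Next I would compute the induced regular sets. For $\widetilde{\mathcal{R}}=\mathbb{R}_{+}^{\mathbb{Z}_{+}^{2}}$, given any positive sequence $\left(a_{m}\right)_{m}\in\mathbb{R}_{+}^{\mathbb{Z}_{+}}$ one sets $N_{q,l}:=a_{q}$ (respectively $N_{q,l}:=a_{l}$) to obtain a double sequence in $\widetilde{\mathcal{R}}$ whose trace $N_{\cdot,0}$ (resp. $N_{0,\cdot}$) equals $\left(a_{m}\right)_{m}$; conversely every such trace is a positive sequence. Hence $\mathcal{R}^{0}=\mathcal{R}_{0}=\mathbb{R}_{+}^{\mathbb{Z}_{+}}$.

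It then remains to check that $\mathcal{G}_{\mathcal{S}^{\ast}}^{\mathbb{R}_{+}^{\mathbb{Z}_{+}}}\left(\mathbb{R}^{n}\right)=\mathcal{G}_{\mathcal{S}^{\ast}}$ and, symmetrically, $\mathcal{G}_{\mathcal{S}_{\ast}}^{\mathbb{R}_{+}^{\mathbb{Z}_{+}}}\left(\mathbb{R}^{n}\right)=\mathcal{G}_{\mathcal{S}_{\ast}}$. The denominators coincide on the nose, since $\mathcal{N}_{\mathcal{S}^{\ast}}\left(\mathbb{R}^{n}\right)$ and $\mathcal{N}_{\mathcal{S}_{\ast}}\left(\mathbb{R}^{n}\right)$ are literally the ideals written in the statement. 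For the numerators one must rewrite the defining condition of $\mathcal{E}_{\mathcal{S}^{\ast}}^{\mathcal{R}}\left(\mathbb{R}^{n}\right)$ when $\mathcal{R}$ is the full set of positive sequences: the clause ``$\exists N\in\mathbb{R}_{+}^{\mathbb{Z}_{+}},\ \forall\alpha$'' must be shown equivalent to ``$\forall\alpha,\ \exists m$'', which is exactly the numerator of $\mathcal{G}_{\mathcal{S}^{\ast}}$.

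The only point deserving care, and the main (though minor) obstacle, is this reordering of quantifiers. The forward direction is trivial, taking $m=N_{\left\vert\alpha\right\vert}$. The converse uses that, for each fixed order $k$, there are only finitely many multi-indices $\alpha$ with $\left\vert\alpha\right\vert=k$; collecting the finitely many exponents $m_{\alpha}$ and setting $N_{k}:=\max_{\left\vert\alpha\right\vert=k}m_{\alpha}$ produces a single positive sequence $N\in\mathbb{R}_{+}^{\mathbb{Z}_{+}}$ realizing the bound for all $\alpha$ at once. The identical argument applies to $\mathcal{S}_{\ast}$ with $x^{\beta}$ in place of $\partial^{\alpha}$. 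Combining these identifications with the displayed consequence of Theorem \ref{th10}, and using commutativity of intersection, yields $\mathcal{G}_{\mathcal{S}}=\mathcal{G}_{\mathcal{S}^{\ast}}\cap\mathcal{G}_{\mathcal{S}_{\ast}}$.
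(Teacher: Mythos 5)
Your proposal is correct and follows exactly the paper's route: the paper derives this corollary purely by specializing Theorem \ref{th10} to $\widetilde{\mathcal{R}}=\mathbb{R}_{+}^{\mathbb{Z}_{+}^{2}}$ and $\Omega=\mathbb{R}^{n}$, leaving implicit the details you spell out (that $\mathbb{R}^{n}$ is a box, that $\mathcal{R}^{0}=\mathcal{R}_{0}=\mathbb{R}_{+}^{\mathbb{Z}_{+}}$, and the quantifier exchange justified by the finiteness of $\{\alpha:\left\vert\alpha\right\vert=k\}$). The only microscopic adjustment: in the forward direction take $m=\lceil N_{\left\vert\alpha\right\vert}\rceil$ rather than $m=N_{\left\vert\alpha\right\vert}$, since $m$ must lie in $\mathbb{Z}_{+}$ while $N_{\left\vert\alpha\right\vert}$ is only a positive real.
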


We have also the following corollary which is an other characterization of
the algebra $\mathcal{G}_{\mathcal{S}}$.

\begin{corollary}
\label{th2}We have
\begin{equation*}
\mathcal{G}_{\mathcal{S}}=\mathcal{G}_{\mathcal{S}_{\ast }}\cap \mathcal{G}_{%
\widehat{\mathcal{S}^{\ast }}}\text{ ,}
\end{equation*}
where
\begin{equation*}
\mathcal{G}_{\widehat{\mathcal{S}^{\ast }}}:=\frac{\left\{ \left(
u_{\epsilon }\right) _{\epsilon }\in \widehat{\mathcal{S}^{\ast }}%
^{I}:\forall \beta \in \mathbb{Z}_{+}^{n},\exists m\in \mathbb{Z}_{+},%
\underset{\xi \in \mathbb{R}^{n}}{\sup }\left\vert \xi ^{\beta }\widehat{%
u_{\epsilon }}\left( \xi \right) \right\vert =O\left( \epsilon ^{-m}\right)
,\epsilon \longrightarrow 0\right\} }{\left\{ \left( u_{\epsilon }\right)
_{\epsilon }\in \widehat{\mathcal{S}^{\ast }}^{I}:\forall \beta \in \mathbb{Z%
}_{+}^{n},\forall m\in \mathbb{Z}_{+},\underset{\xi \in \mathbb{R}^{n}}{\sup
}\left\vert \xi ^{\beta }\widehat{u_{\epsilon }}\left( \xi \right)
\right\vert =O\left( \epsilon ^{m}\right) ,\epsilon \longrightarrow
0\right\} }
\end{equation*}
\end{corollary}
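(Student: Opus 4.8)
The plan is to read the statement as the special case $\widetilde{\mathcal{R}} = \mathbb{R}_{+}^{\mathbb{Z}_{+}^{2}}$ of Theorem \ref{prop2}. As recalled at the beginning of this section, for this choice of $\widetilde{\mathcal{R}}$ one has $\mathcal{G}_{\mathcal{S}}^{\widetilde{\mathcal{R}}} = \mathcal{G}_{\mathcal{S}}$, so the left-hand sides already agree; the only work is to identify the two factors on the right-hand side of Theorem \ref{prop2} with the algebras $\mathcal{G}_{\mathcal{S}_{\ast}}$ and $\mathcal{G}_{\widehat{\mathcal{S}^{\ast}}}$ displayed in the statement.

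First I would compute the projected regular sets. Since every single sequence extends trivially to a positive double sequence, the definitions $\mathcal{R}^{0} = \{N_{.,0} : N \in \widetilde{\mathcal{R}}\}$ and $\mathcal{R}_{0} = \{N_{0,.} : N \in \widetilde{\mathcal{R}}\}$ both collapse to all of $\mathbb{R}_{+}^{\mathbb{Z}_{+}}$ when $\widetilde{\mathcal{R}} = \mathbb{R}_{+}^{\mathbb{Z}_{+}^{2}}$. Hence the two right-hand factors become $\mathcal{G}_{\mathcal{S}_{\ast}}^{\mathbb{R}_{+}^{\mathbb{Z}_{+}}}$ and $\mathcal{G}_{\widehat{\mathcal{S}^{\ast}}}^{\mathbb{R}_{+}^{\mathbb{Z}_{+}}}$, and it remains to match these quotients with those written in the corollary.

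For the denominators there is nothing to prove: by the null characterizations already established, the ideals $\mathcal{N}_{\mathcal{S}_{\ast}}$ and $\mathcal{N}_{\widehat{\mathcal{S}^{\ast}}}$ are independent of the regular set, and their defining conditions are literally the ones appearing in the denominators of the statement. For the numerators the only point to verify is the equivalence of the two quantifier patterns, namely that ``$\exists N \in \mathbb{R}_{+}^{\mathbb{Z}_{+}}, \forall \beta, \sup |\xi^{\beta} \widehat{u_{\epsilon}}| = O(\epsilon^{-N_{|\beta|}})$'' is the same as ``$\forall \beta, \exists m, \sup |\xi^{\beta} \widehat{u_{\epsilon}}| = O(\epsilon^{-m})$'', and likewise for $\mathcal{S}_{\ast}$. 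The forward implication is immediate on taking $m = N_{|\beta|}$; for the converse one sets $N_{k} = \max_{|\beta| = k} m_{\beta}$, a finite maximum because only finitely many multi-indices have length $k$, which furnishes an admissible element of $\mathbb{R}_{+}^{\mathbb{Z}_{+}}$.

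With these identifications in hand the assertion is precisely Theorem \ref{prop2} evaluated at $\widetilde{\mathcal{R}} = \mathbb{R}_{+}^{\mathbb{Z}_{+}^{2}}$. I do not anticipate a genuine obstacle; the sole care required is the bookkeeping of the quantifier exchange, which is elementary owing to the finiteness of the set of multi-indices of a given length.
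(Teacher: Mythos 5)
Your proposal is correct and takes essentially the same route as the paper: Corollary \ref{th2} is presented there as an immediate consequence of Theorem \ref{prop2} specialized to $\widetilde{\mathcal{R}}=\mathbb{R}_{+}^{\mathbb{Z}_{+}^{2}}$, which is exactly your reading. The bookkeeping you supply (the projections $\mathcal{R}^{0}=\mathcal{R}_{0}=\mathbb{R}_{+}^{\mathbb{Z}_{+}}$, the invariance of the ideals $\mathcal{N}_{\mathcal{S}_{\ast }}$ and $\mathcal{N}_{\widehat{\mathcal{S}^{\ast }}}$, and the quantifier exchange using the finiteness of the set of multi-indices of each length) is precisely the routine verification the paper leaves implicit.
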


We also know that when $\widetilde{\mathcal{R}}=\widetilde{\mathcal{B}}$ we
obtain\bigskip\ $\mathcal{G}_{\mathcal{S}}^{\widetilde{\mathcal{B}}}=%
\mathcal{G}_{\mathcal{S}}^{\infty }.$ The next result, which is a corollary
of theorem \ref{th10} for $\mathcal{R}=$ $\widetilde{\mathcal{B}}$ , gives a
characterization of $\mathcal{G}_{\mathcal{S}}^{\infty }.$

\begin{corollary}
\label{prop1}We have
\begin{equation*}
\mathcal{G}_{\mathcal{S}}^{\infty }=\mathcal{G}_{\mathcal{S}^{\ast
}}^{\infty }\cap \mathcal{G}_{\mathcal{S}_{\ast }}^{\infty }\text{ ,}
\end{equation*}
where
\begin{equation*}
\mathcal{G}_{\mathcal{S}^{\ast }}^{\infty }:=\frac{\left\{ \left(
u_{\epsilon }\right) _{\epsilon }\in \mathcal{S}^{\ast I}:\exists m\in
\mathbb{Z}_{+},\forall \alpha \in \mathbb{Z}_{+}^{n},\underset{x\in \mathbb{R%
}^{n}}{\sup }\left\vert \partial ^{\alpha }u_{\epsilon }\left( x\right)
\right\vert =O\left( \epsilon ^{-m}\right) ,\epsilon \longrightarrow
0\right\} }{\left\{ \left( u_{\epsilon }\right) _{\epsilon }\in \mathcal{S}%
^{\ast I}:\forall m\in \mathbb{Z}_{+},\forall \alpha \in \mathbb{Z}_{+}^{n},%
\underset{x\in \mathbb{R}^{n}}{\sup }\left\vert \partial ^{\alpha
}u_{\epsilon }\left( x\right) \right\vert =O\left( \epsilon ^{m}\right)
,\epsilon \longrightarrow 0\right\} }\text{ ,}
\end{equation*}
and
\begin{equation*}
\mathcal{G}_{\mathcal{S}_{\ast }}^{\infty }:=\frac{\left\{ \left(
u_{\epsilon }\right) _{\epsilon }\in \mathcal{S}_{\ast }^{I}:\exists m\in
\mathbb{Z}_{+},\forall \beta \in \mathbb{Z}_{+}^{n},\underset{x\in \mathbb{R}%
^{n}}{\sup }\left\vert x^{\beta }u_{\epsilon }\left( x\right) \right\vert
=O\left( \epsilon ^{-m}\right) ,\epsilon \longrightarrow 0\right\} }{\left\{
\left( u_{\epsilon }\right) _{\epsilon }\in \mathcal{S}_{\ast }^{I}:\forall
m\in \mathbb{Z}_{+},\forall \beta \in \mathbb{Z}_{+}^{n},\underset{x\in
\mathbb{R}^{n}}{\sup }\left\vert x^{\beta }u_{\epsilon }\left( x\right)
\right\vert =O\left( \epsilon ^{m}\right) ,\epsilon \longrightarrow
0\right\} }
\end{equation*}
\end{corollary}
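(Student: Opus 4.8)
The plan is to obtain this as the special case $\widetilde{\mathcal{R}} = \widetilde{\mathcal{B}}$ of Theorem~\ref{th10}, applied to the box $\Omega = \mathbb{R}^{n}$. Since $\mathbb{R}^{n}$ is a box and, as recorded just before the statement, $\mathcal{G}_{\mathcal{S}}^{\widetilde{\mathcal{B}}} = \mathcal{G}_{\mathcal{S}}^{\infty}$, Theorem~\ref{th10} gives immediately
\[
\mathcal{G}_{\mathcal{S}}^{\infty} = \mathcal{G}_{\mathcal{S}_{\ast}}^{(\widetilde{\mathcal{B}})_{0}} \cap \mathcal{G}_{\mathcal{S}^{\ast}}^{(\widetilde{\mathcal{B}})^{0}} .
\]
So the entire task reduces to identifying the two one-index slices of $\widetilde{\mathcal{B}}$ and then matching the resulting $\mathcal{R}$-regular algebras with the $\infty$-regular algebras $\mathcal{G}_{\mathcal{S}_{\ast}}^{\infty}$ and $\mathcal{G}_{\mathcal{S}^{\ast}}^{\infty}$ written in the statement.

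First I would compute the slices. By the lemma on the sets $\mathcal{R}^{0} = \{N_{\cdot,0} : N \in \widetilde{\mathcal{R}}\}$ and $\mathcal{R}_{0} = \{N_{0,\cdot} : N \in \widetilde{\mathcal{R}}\}$, freezing either index of a bounded double sequence yields a bounded single sequence, while conversely every bounded single sequence is the corresponding slice of a (for instance, index-independent) element of $\widetilde{\mathcal{B}}$. Hence $(\widetilde{\mathcal{B}})^{0} = (\widetilde{\mathcal{B}})_{0} = \mathcal{B}$, the set of bounded single sequences. It then remains to prove the two identifications $\mathcal{G}_{\mathcal{S}^{\ast}}^{\mathcal{B}} = \mathcal{G}_{\mathcal{S}^{\ast}}^{\infty}$ and $\mathcal{G}_{\mathcal{S}_{\ast}}^{\mathcal{B}} = \mathcal{G}_{\mathcal{S}_{\ast}}^{\infty}$, both taken over $\Omega = \mathbb{R}^{n}$.

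The one substantive point — and the only place where boundedness is used — is the equivalence between a bounded sequence of orders and a single uniform order. An element lies in $\mathcal{E}_{\mathcal{S}^{\ast}}^{\mathcal{B}}$ iff there is $N \in \mathcal{B}$ with $\sup_{x}\lvert \partial^{\alpha} u_{\epsilon}(x)\rvert = O(\epsilon^{-N_{\lvert\alpha\rvert}})$ for all $\alpha$. Choosing $m \in \mathbb{Z}_{+}$ with $N_{\lvert\alpha\rvert}\le m$ for every $\alpha$, and using $\epsilon \in I = \,]0,1]$, this yields the uniform estimate $O(\epsilon^{-m})$; conversely any integer $m$ is realised by a constant sequence in $\mathcal{B}$. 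Thus the quantifier $\exists N \in \mathcal{B}$ collapses to $\exists m \in \mathbb{Z}_{+}$ \emph{preceding} $\forall\alpha$, which is precisely the numerator of $\mathcal{G}_{\mathcal{S}^{\ast}}^{\infty}$ in the statement. The denominators already agree, since by part $(iii)$ of the proposition for $\mathcal{S}^{\ast}$ one has $\mathcal{N}_{\mathcal{S}^{\ast}}^{\mathcal{R}} = \mathcal{N}_{\mathcal{S}^{\ast}}$ for every regular $\mathcal{R}$, and $\mathcal{N}_{\mathcal{S}^{\ast}}$ is exactly the denominator of $\mathcal{G}_{\mathcal{S}^{\ast}}^{\infty}$. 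The same reasoning, with $x^{\beta}$ in place of $\partial^{\alpha}$ and the corresponding proposition for $\mathcal{S}_{\ast}$, gives $\mathcal{G}_{\mathcal{S}_{\ast}}^{\mathcal{B}} = \mathcal{G}_{\mathcal{S}_{\ast}}^{\infty}$.

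Substituting these identifications into the displayed consequence of Theorem~\ref{th10} yields
\[
\mathcal{G}_{\mathcal{S}}^{\infty} = \mathcal{G}_{\mathcal{S}_{\ast}}^{\mathcal{B}} \cap \mathcal{G}_{\mathcal{S}^{\ast}}^{\mathcal{B}} = \mathcal{G}_{\mathcal{S}_{\ast}}^{\infty} \cap \mathcal{G}_{\mathcal{S}^{\ast}}^{\infty},
\]
which is the claim. I do not anticipate any real obstacle: the proof is purely bookkeeping on top of Theorem~\ref{th10}, the entire content being the bounded-sequence/single-exponent equivalence above, which also explains why the resulting condition is the uniform ($\exists m,\ \forall\alpha$) one characteristic of the regular algebras rather than the nonuniform ($\forall\alpha,\ \exists m$) one.
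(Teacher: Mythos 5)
Your proposal is correct and takes essentially the same route as the paper: the paper likewise obtains this corollary by specializing Theorem \ref{th10} to $\widetilde{\mathcal{R}}=\widetilde{\mathcal{B}}$ on the box $\Omega =\mathbb{R}^{n}$, using $\mathcal{G}_{\mathcal{S}}^{\widetilde{\mathcal{B}}}=\mathcal{G}_{\mathcal{S}}^{\infty }$. Your additional bookkeeping (that both slices of $\widetilde{\mathcal{B}}$ equal $\mathcal{B}$, and that a bounded exponent sequence collapses, for $\epsilon \in \left] 0,1\right]$, to a single uniform exponent $m$ placed before $\forall \alpha$) is exactly the routine verification the paper leaves implicit.
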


We have also the following result obtained as a corollary of the theorem \ref%
{prop2}.

\begin{corollary}
We have
\begin{equation*}
\mathcal{G}_{\mathcal{S}}^{\infty }=\mathcal{G}_{\mathcal{S}_{\ast
}}^{\infty }\cap \mathcal{G}_{\widehat{\mathcal{S}^{\ast }}}^{\infty }
\end{equation*}%
where
\begin{equation*}
\mathcal{G}_{\widehat{\mathcal{S}^{\ast }}}^{\infty }:=\frac{\left\{ \left(
u_{\epsilon }\right) _{\epsilon }\in \widehat{\mathcal{S}^{\ast }}%
^{I}:\exists m\in \mathbb{Z}_{+},\forall \beta \in \mathbb{Z}_{+}^{n},%
\underset{\xi \in \mathbb{R}^{n}}{\sup }\left\vert \xi ^{\beta }\widehat{%
u_{\epsilon }}\left( \xi \right) \right\vert =O\left( \epsilon ^{-m}\right)
,\epsilon \longrightarrow 0\right\} }{\left\{ \left( u_{\epsilon }\right)
_{\epsilon }\in \widehat{\mathcal{S}^{\ast }}^{I}:\forall m\in \mathbb{Z}%
_{+},\forall \beta \in \mathbb{Z}_{+}^{n},\underset{\xi \in \mathbb{R}^{n}}{%
\sup }\left\vert \xi ^{\beta }\widehat{u_{\epsilon }}\left( \xi \right)
\right\vert =O\left( \epsilon ^{m}\right) ,\epsilon \longrightarrow
0\right\} }
\end{equation*}
\end{corollary}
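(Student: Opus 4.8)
The plan is to read this corollary off from Theorem \ref{prop2} by specializing $\widetilde{\mathcal{R}}$ to the set $\widetilde{\mathcal{B}}$ of bounded double sequences, for which we have already recorded $\mathcal{G}_{\mathcal{S}}^{\widetilde{\mathcal{B}}}=\mathcal{G}_{\mathcal{S}}^{\infty}$. With this choice Theorem \ref{prop2} gives directly
\begin{equation*}
\mathcal{G}_{\mathcal{S}}^{\infty}=\mathcal{G}_{\mathcal{S}_{\ast}}^{\mathcal{R}_{0}}\cap\mathcal{G}_{\widehat{\mathcal{S}^{\ast}}}^{\mathcal{R}^{0}},
\end{equation*}
where $\mathcal{R}_{0}$ and $\mathcal{R}^{0}$ are the one-index regular sets attached to $\widetilde{\mathcal{B}}$ by the lemma of Section 2. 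All that remains is to identify these regular sets and to match the two quotient algebras above with the ones written in the statement.

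First I would compute $\mathcal{R}_{0}$ and $\mathcal{R}^{0}$. Writing $\mathcal{B}$ for the set of all bounded sequences of $\mathbb{R}_{+}^{\mathbb{Z}_{+}}$, I claim $\mathcal{R}_{0}=\mathcal{R}^{0}=\mathcal{B}$. Indeed, every restriction $N_{0,\cdot}$ or $N_{\cdot,0}$ of a bounded double sequence is itself a bounded sequence, so $\mathcal{R}_{0}\subset\mathcal{B}$ and $\mathcal{R}^{0}\subset\mathcal{B}$; conversely, given $(M_{l})_{l}\in\mathcal{B}$, the constant extension $N_{q,l}:=M_{l}$ lies in $\widetilde{\mathcal{B}}$ and restricts to $(M_{l})_{l}$, with the symmetric construction handling $\mathcal{R}^{0}$.

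Next I would match the algebras by a translation of quantifiers. The numerator of $\mathcal{G}_{\mathcal{S}_{\ast}}^{\mathcal{B}}$ asks for some $N\in\mathcal{B}$ with $\sup_{x}\left\vert x^{\beta}u_{\epsilon}(x)\right\vert=O(\epsilon^{-N_{\left\vert\beta\right\vert}})$ for all $\beta$; this is equivalent to the existence of an integer $m$ with $O(\epsilon^{-m})$ for all $\beta$. The backward direction takes the constant sequence $N\equiv m\in\mathcal{B}$; the forward direction uses that a bounded $N$ satisfies $N_{\left\vert\beta\right\vert}\le m$ for some integer $m$, whence $\epsilon^{-N_{\left\vert\beta\right\vert}}=O(\epsilon^{-m})$ uniformly in $\beta$ for $0<\epsilon\le 1$. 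This identifies $\mathcal{G}_{\mathcal{S}_{\ast}}^{\mathcal{B}}$ with the algebra $\mathcal{G}_{\mathcal{S}_{\ast}}^{\infty}$ of Corollary \ref{prop1}, and the identical argument identifies $\mathcal{G}_{\widehat{\mathcal{S}^{\ast}}}^{\mathcal{B}}$ with the algebra $\mathcal{G}_{\widehat{\mathcal{S}^{\ast}}}^{\infty}$ of the statement; the denominators, being the null ideals phrased with $\forall m$, coincide automatically.

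There is no real obstacle beyond careful bookkeeping: the only delicate point is to preserve the uniformity in $\beta$ when passing from the abstract membership $N\in\mathcal{B}$ to a single integer exponent $m$, which is exactly why the constant-sequence witness is used in both directions.
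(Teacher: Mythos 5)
Your proposal is correct and follows exactly the paper's route: the paper obtains this corollary by specializing Theorem \ref{prop2} to $\widetilde{\mathcal{R}}=\widetilde{\mathcal{B}}$, using $\mathcal{G}_{\mathcal{S}}^{\widetilde{\mathcal{B}}}=\mathcal{G}_{\mathcal{S}}^{\infty }$. The paper leaves the bookkeeping implicit, and you supply precisely those routine details — identifying $\mathcal{R}_{0}=\mathcal{R}^{0}=\mathcal{B}$ and converting ``$\exists N\in \mathcal{B}$'' into a single integer exponent $m$ via constant sequences and boundedness — so there is nothing to correct.
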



\begin{thebibliography}{99}
\bibitem{Alvar} J. Alvarez, H. Obiedat, Characterizations of the Schwartz
space $S$\ and the Beurling-Bj\"{o}rck space $S_{\omega }$. Cubo 6, p.
167-183, $(2004).$

\bibitem{BO} H. Biagioni, M. Oberguggenberger, Generalized solutions to the
Korteweg-de Vries and the generalized long-wave equations. SIAM J. Math
Anal., 23(4), p. 923-940, $\left( 1992\right) $

\bibitem{Bouzben} C. Bouzar, K. Benmeriem, Ultraregular generalized
functions. Oran-Essenia University, Preprint $\left( 2006\right) .$

\bibitem{SY} S.-Y. Chung, D. Kim et S. Lee. Characterization for
Beurling-Bjorck space and Schwartz space. Proc. A.M.S. Vol. 125, N$^{\circ }$
11, p. 3229-3234, $\left( 1997\right) $.

\bibitem{Colombeau} J. F. Colombeau, Elementary introduction to new
generalized functions, North Holland, $\left( 1985\right) $.

\bibitem{CHO} J. F. Colombeau, A. Heibig, M. Oberguggenberger, Generalized
solutions to PDEs of evolution type. Acta Appl. Math. 45, p.115--142. $%
\left( 1996\right) $

\bibitem{Del} A. Delcroix, Fourier Transform of rapidly decreasing
generalized functions. Application to microlocal regularity. J. Math. Anal.
Appl., 327, p. 564-584, $\left( 2007\right) .$

\bibitem{Garetto} C. Garetto, Pseudo-differential operators in algebras of
generalized functions and global hypoellipticity. Acta Appl. Math., 80(2):1,
p. 123-174$,$ $\left( 2004\right) $.

\bibitem{Garetto2} C. Garetto, Topological structures in Colombeau algebras:
investigation of the duals of $\mathcal{G}_{c}\left( \Omega \right) ,%
\mathcal{G}\left( \Omega \right) $ and $\mathcal{G}\left( \mathbb{R}%
^{n}\right) $. Monatsh. Math., 146(3), p. 203--226, $\left( 2005\right) $

\bibitem{GV} S. Gindikin, L. Volevich. The Cauchy problem and related
problems for convolution equations. Uspehi Mat. Nauk, vol. 27, no. 4,
65--143, $\left( 1972\right) .$

\bibitem{Groch} K. Gr\"{o}chenig, G. Zimmermann, Spaces of test functions
via the STFT, J. Funct. Spaces Appl. 2, p. 25--53, $(2004).$

\bibitem{Guel} I. M. Guelfand, G. E. Shilov. Generalized functions, vol. 2,
Academic Press, $(1967).$

\bibitem{Grosser} M. Grosser, M Kunzinger, M. Oberguggenberger, R.
Steinbauer . Geometric theory of generalized functions with applications to
general relativity. Kluwer. $\left( 2001\right) $.

\bibitem{Hormander} L. H\"{o}rmander, Distributions theory and Fourier
analysis. Springer, $\left( 1983\right) $.

\bibitem{Kas} A. I. Kashpirovski, Equality of the spaces $S_{\beta }^{\alpha
}$\ and $S^{\alpha }\cap S_{\beta }$, Funct. Anal. Appl., 14, p. 129, $%
(1980).$

\bibitem{Oberg} M. Oberguggenberger, Regularity theory in Colombeau
algebras. Bull. T. CXXXIII. Acad. Serbe Sci. Arts, Cl. Sci. Math. Nat. Sci.
Math., 31, p. 147-162 , $\left( 2006\right) $.

\bibitem{Ortner} N. Ortner, P. Wagner, Applications of weighted $\mathcal{D}%
_{\mathbb{L}^{P}}^{\prime }$-spaces to the convolution of distributions.
Bull. Polish Acad. Sc. Math., Vol. 37, N$^{0}$ 7-12, p. 579-595, $\left(
1989\right) .$

\bibitem{Radyno} Ya. V. Radyno, Sabra Ramdan, Ngo Fu Tkhan, The Fourier
transform in an algebra of new generalized functions. Russian Acad. Sci.
Dokl. Math. Vol. $46$, N$^{\circ }3$, p. 414-417 , $\left( 1993\right) .$

\bibitem{Schwartz} L. Schwartz, Th\'{e}orie des distributions. Herman,
Paris, 2$^{eme}$ Ed., $\left( 1966\right) $.
\end{thebibliography}
\end{document}